\documentclass{article} \usepackage{amsmath}
\usepackage{amsthm}
\usepackage{amsfonts}
\usepackage{amssymb}
\usepackage{mathrsfs}
\usepackage{outline}
\usepackage{graphicx}
\usepackage{caption}
\usepackage{subcaption}
\graphicspath{ {./images/} }
\usepackage[alphabetic]{amsrefs}

\newcommand{\stspace}{\mathcal{S}}

\newcommand{\inward}{\partial_+S}

\newcommand{\DID}{\mathcal{D}}

\newcommand{\crossover}{\mathscr{C}}

\newcommand{\dom}{\text{dom}}
\newcommand{\RR}{\mathbb{R}}

\newcommand{\geod}{\mathcal{G}}
\newcommand{\gind}{\mathscr{G}}
\newcommand{\eps}{\varepsilon}
\newcommand{\inj}{\text{inj}}

\newcommand{\ps}{\mathcal{A}}
\newcommand{\pg}{\mathcal{A}^{p.g.}}
\newcommand{\pgd}{d_{p.g.}}
\newcommand{\admX}{\mathcal{A}^X}

\newtheorem{thm}{Theorem}[section]
\newtheorem{lem}[thm]{Lemma}
\newtheorem{cor}[thm]{Corollary}

\newtheorem{defn}[thm]{Definition}

\theoremstyle{definition}
\newtheorem{const}[thm]{Construction}

\title{Stitching Data: Recovering a Manifold's Geometry from Geodesic Intersections}
\author{Reed Meyerson}
\date{}

\begin{document}
\maketitle

\begin{abstract}
    Let $(M,g)$ be a Riemannian manifold with boundary. We show that knowledge of the length of each geodesic, and where pairwise intersections occur along the corresponding geodesics allows for recovery of the geometry of $(M,g)$ (assuming $(M,g)$ admits a Riemannian collar of a uniform radius). We call this knowledge the `stitching data'. We then pose a boundary measurement type problem called the `delayed collision data problem' and apply our result about the stitching data to recover the geometry from the collision data (with some reasonable geometric restrictions on the manifold).
\end{abstract}


\section{Introduction}
Let $(M,g)$ be a Riemannian manifold with boundary. Imagine each geodesic of $M$ is a string of a length determined by the metric. Now, suppose that for each pair of intersecting geodesics, you know where they intersect and how intersection points on the first geodesic correspond to intersection points on the second. With this information, one could imagine gluing all of the strings together in the right places to reconstruct the manifold. The image that comes to mind is that of stitching a collection of threads together to form a piece of fabric. Thus, we will call the information described above the `stitching data'. With reasonable geometric constraints, we will show that knowledge of the stitching data does indeed allow us to recover the geometry of the manifold it came from.
\par
Additionally, when every geodesic intersects the boundary, we can think of the stitching data as a type of boundary data, and we can place this in the broader setting of boundary rigidity problems. We describe a geometric data set called the \textit{delayed collision data} which encodes when two particles fired from different points on the boundary at different times will first collide (if they collide at all). We show that the delayed collision data determines the stitching data (again, with reasonable geometric assumptions).

\section{The Stitching Data}
Let $(M,g)$ be a Riemannian manifold with boundary. Let $X_g$ be the geodesic vector field on $TM$. Then for each vector $v\in TM$, there is an integral curve $\hat\gamma_v :I_v\to TM$, where $\hat\gamma_v(0)=v$, and $I_v$ is the maximal domain. 
\par
In general, $I_v$ could be any type of interval (closed, open, infinite, half-open, etc...). Additionally, $I_v$ could be the singleton set $\{0\}$. We let $\gamma_v:I_v\to M$ be the projection of $\hat\gamma_v$ onto the base space $M$.
\par
Let $SM\subset TM$ denote the unit sphere bundle.
For $v,w\in SM$, write $v\sim_{SM} w$ if there exists $t\in I_v$ such that $w=\hat\gamma_v(t)$. Then it is easy to verify that $\sim_{SM}$ is an equivalence relation on $SM$. Thus, $\sim_{SM}$ partitions $SM$ into equivalence classes. Let $[v]\subset SM$ denote the equivalence class containing $v$. We let $\geod= SM/\sim_{SM}$. $\geod$ represents the space of geodesics.
\par
We are now able to formally define the geodesic data described in the introduction:

\begin{defn}\label{stitch_defn}
    Let $(M,g)$ be a Riemannian manifold with boundary. A \textbf{stitching data for} $(M,g)$ is a triple $(\gind, m,\crossover)$ such that there exists a funciton $f:\gind \to SM$ satsifying
    \begin{enumerate}
        \item $\alpha\mapsto [f(\alpha)]$ is surjective from $\gind$ to $\geod$.
        \item For each $\alpha\in \gind$, $m_\alpha=I_{f(\alpha)}$.
        \item For $\alpha,\beta\in\gind$, $\crossover_{\alpha,\beta}:m_\alpha\to 2^{m_\beta}$ satisies $t\in\crossover_{\alpha,\beta}(s)$ if and only if $\gamma_{f(\alpha)}(s)=\gamma_{f(\beta)}(t)$.
    \end{enumerate}
\end{defn}
We recall that, for each $x\in M$, there is an exponential function, $\exp_x$, defined on a subset of $T_xM$ taking values in $M$. We call this subset $\dom(\exp_x)\subset T_xM$ and define it as follows:
for $v\in T_xM$, we write $v\in\dom(\exp_x)$ if $[0,1]\subset I_v$. This lets us define $\exp_x:\dom(\exp_x)\to M$ by $\exp_x(v)=\gamma_v(1)$.
\par
When $x$ is in the interior of $M$ (i.e. $x\in M\setminus\partial M$) the exponential map is a local diffeomorphism. Specifically there exists $\eps>0$ such that $B_\eps(0)\subset\dom(\exp_x)$ and $\exp_x:B_\eps(0)\to M$ is a diffeomorphism onto its image (where $B_\eps(0)=\{v\in T_xM| |v|_g\le \eps\}$). We let $\inj_x$ be the supremum over all such $\eps$. Note that $\inj_x$ may be infinite.
\par
For a full review of the exponential map, we refer readers to \cite{lee2018riemannian}. 
\par
The following definition is adapted from \cite{Schick_2001}.
\begin{defn}
    Let $(M,g)$ be a Riemannian manifold with boundary. For $x\in\partial M$, let $\nu_x$ be the inward pointing unit normal vector at $x$. 
    Let $r_C>0$. If the map $K:(x,t)\mapsto \exp_x(t\nu_x)$ from $\partial M\times [0,r_C)\to M$ is defined and is a diffeomorphism onto its image, we say that $r_C$ is a \textbf{collar radius for} $(M,g)$. If there exists a collar radius for $(M,g)$, we say that $(M,g)$ is \textbf{collarable}.
\end{defn}
If $r_C$ is a collar radius for $(M,g)$, we let $N(r_C)$ denote the image $K(M\times [0,r_C))$. We call the coordinate $(x,t)\mapsto K(x,t)$ boundary normal coordinates for $N(r_C)$.
\par
We are now able to state our main result.
\begin{thm}\label{stitch_thm}
    Let $(M,g)$ be a collarable Riemannian manifold with boundary. Then a stitching data for $(M,g)$ determines its isometry class.
\end{thm}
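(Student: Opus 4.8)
The plan is to reconstruct $(M,g)$ up to isometry in three stages --- as a set, then as a metric space, then as a Riemannian manifold with boundary --- so that any two manifolds carrying the same stitching data come out isometric.

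\emph{Stage 1: the underlying set.} Given a stitching data $(\gind,m,\crossover)$ with witness $f:\gind\to SM$, set
\[
\hat M:=\Big(\textstyle\bigsqcup_{\alpha\in\gind}m_\alpha\Big)\big/\approx,\qquad (\alpha,s)\approx(\beta,t)\ \Longleftrightarrow\ t\in\crossover_{\alpha,\beta}(s).
\]
By Definition~\ref{stitch_defn}(3), $\approx$ is exactly the pullback of equality of points of $M$ along $(\alpha,s)\mapsto\gamma_{f(\alpha)}(s)$; in particular $\approx$ is an equivalence relation and the induced map $\Phi:\hat M\to M$ is well defined and injective. It is surjective: given $x\in M$, pick $v\in S_xM$ and (by (1)) an $\alpha$ with $[f(\alpha)]=[v]$, so $x\in\gamma_{f(\alpha)}(m_\alpha)$. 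Since $\hat M$ and $\approx$ are built from $(\gind,m,\crossover)$ alone, a second manifold $(M',g')$ with the same stitching data (witness $f'$) yields a second bijection $\Phi':\hat M\to M'$; composing gives a canonical bijection $\psi:=\Phi'\circ\Phi^{-1}:M\to M'$, and the theorem amounts to showing $\psi$ is a Riemannian isometry.

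\emph{Stage 2: the distance function.} On $\hat M$ define the piecewise-geodesic distance $\pgd$ by letting $\pgd(p,q)$ be the infimum of $\sum_{i=1}^{k}|s_i-s_{i-1}|$ over finite chains $p=p_0,\dots,p_k=q$ in $\hat M$ such that, for each $i$, some $\alpha_i\in\gind$ has $p_{i-1},p_i$ represented by $(\alpha_i,s_{i-1}),(\alpha_i,s_i)$ with $[\min(s_{i-1},s_i),\max(s_{i-1},s_i)]\subseteq m_{\alpha_i}$. This quantity depends only on $(\gind,m,\crossover)$, and I claim $d_g(\Phi(p),\Phi(q))=\pgd(p,q)$. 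The inequality $d_g\le\pgd$ is immediate, since each segment $s\mapsto\gamma_{f(\alpha_i)}(s)$ is a unit-speed path in $M$. For $d_g\ge\pgd$, take a piecewise-smooth $\sigma$ in $M$ from $\Phi(p)$ to $\Phi(q)$ with $\mathrm{length}(\sigma)\le d_g(\Phi(p),\Phi(q))+\eps$; here collarability enters. Using boundary normal coordinates on a collar $N(r_C)$, with $g=dt^2+h_t$, push $\sigma$ off $\partial M$ to a curve $\tilde\sigma\subset M\setminus\partial M$ by raising the normal coordinate to a small $\delta>0$ --- continuity of $h_t$ in $t$ bounds the length change by $C\delta$ --- and reconnect the endpoints of $\tilde\sigma$ to $\Phi(p),\Phi(q)$ by inward normal geodesic segments of length $\le\delta$, which lie in $N(r_C)$ and are one-segment chains. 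Then cover the compact set $\tilde\sigma$ by finitely many totally normal neighborhoods of $M\setminus\partial M$ and replace $\tilde\sigma$, between consecutive crossings, by the minimizing geodesics there (each no longer than the subarc it replaces); each such geodesic is a subarc of some $\gamma_{f(\alpha)}$ by (1), and the parameters of its endpoints are recovered from $\crossover$, so concatenation exhibits a chain in $\hat M$. Hence $\pgd(p,q)\le 2\delta+\mathrm{length}(\tilde\sigma)\le d_g(\Phi(p),\Phi(q))+3\eps+(2+C)\delta$; let $\delta,\eps\to0$.

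\emph{Stage 3: the Riemannian structure and the main difficulty.} By Stages 1--2, $d_{g'}(\psi(x),\psi(y))=d_g(x,y)$ for all $x,y\in M$, so $\psi$ is a bijective isometry of the metric spaces $(M,d_g)$ and $(M',d_{g'})$. A distance-preserving bijection between Riemannian manifolds with boundary is a smooth Riemannian isometry: it carries $\partial M$ onto $\partial M'$ (small metric spheres about interior points are $(\dim M-1)$-spheres, those about boundary points are disks), is smooth on the interiors by the classical Myers--Steenrod theorem, and extends smoothly across the boundary using the collar; therefore $(M,g)\cong(M',g')$. I expect the crux to be the identity $d_g=\Phi^*\pgd$ near $\partial M$: a near-minimizing curve may run along or graze the boundary, where some geodesics degenerate ($I_v=\{0\}$) and nearby boundary points need not be joined by a short geodesic of $M$, so one genuinely needs the collar both to perturb the curve into the interior at asymptotically no cost in length and to supply the normal segments reconnecting to the boundary endpoints. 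If one would rather not cite the boundary version of Myers--Steenrod, Stage 3 can instead be carried out by hand: recover $\partial M$ intrinsically ($x\in\partial M$ iff some $m_\alpha$ contains an endpoint $s^\ast$ with $\gamma_{f(\alpha)}(s^\ast)=x$), then build interior $\exp$-charts and boundary normal charts directly from the chain data and compare $g$ in those charts.
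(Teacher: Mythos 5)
Your proposal is correct and follows essentially the same route as the paper: build the quotient space from the crossover relation (the paper's stitching space $X$ with the map $\varphi$), and prove that the piecewise-geodesic distance agrees with $d_g$ by using the collar to push near-minimizing curves off $\partial M$ at negligible cost and then replacing the interior curve by short minimizing geodesic segments (the paper's Lemmas \ref{flow_const}, \ref{lem2} and \ref{lem1}). Your Stage 3, upgrading the metric isometry to a Riemannian isometry via a Myers--Steenrod-type argument with boundary, is a point the paper leaves implicit, so including it only makes the argument more complete.
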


When $(M,g)$ is a compact manifold with boundary, there are always boundary normal coordinates. Thus, we obtain the following corollary.

\begin{cor}
    Let $(M,g)$ be a compact Riemannian manifold with boundary. Then a stitching data for $(M,g)$ determines its isometry class.
\end{cor}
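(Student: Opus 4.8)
The plan is to derive the corollary directly from Theorem~\ref{stitch_thm}: that theorem already gives the conclusion for every collarable Riemannian manifold with boundary, so it suffices to verify that every compact such manifold is collarable, i.e.\ admits a collar radius. This is essentially the remark recalled just before the corollary that a compact manifold with boundary carries boundary normal coordinates; I will indicate how the normal exponential map produces a collar and why compactness makes its radius uniform.

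Write $K(x,t)=\exp_x(t\nu_x)$ for $x\in\partial M$. First I would note that $K$ is defined on $\partial M\times[0,r_0)$ for some $r_0>0$: since $M$ is compact, $t\mapsto\exp_x(t\nu_x)$ extends as long as it stays in $M$, so the inward normal geodesic from $x$ remains in the interior for a positive time $\tau(x)\in(0,\infty]$; a uniform-convergence argument shows $\tau$ is lower semicontinuous, hence bounded below by some $r_0>0$ on the compact set $\partial M$. Next, $dK_{(x,0)}$ is the map $(w,a)\mapsto w+a\nu_x$ from $T_x\partial M\oplus\RR$ to $T_xM$, which is a linear isomorphism because $\nu_x$ is a nonzero vector orthogonal to $T_x\partial M$; so $K$ is a local diffeomorphism near each $(x,0)$, and by compactness of $\partial M$ it is a local diffeomorphism on $\partial M\times[0,r_1)$ for some $r_1\in(0,r_0]$.

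It remains to shrink the width to make $K$ injective, which I would do by contradiction: if $K$ were non-injective on $\partial M\times[0,1/n)$ for every $n$, pick $(x_n,t_n)\ne(x_n',t_n')$ in that slab with $K(x_n,t_n)=K(x_n',t_n')$; passing to convergent subsequences $x_n\to x$, $x_n'\to x'$ and using $t_n,t_n'\to0$ together with $K(\cdot,0)=\mathrm{id}_{\partial M}$ forces $x=x'$, so both sequences converge to $(x,0)$, contradicting the local injectivity of $K$ there. Hence $K$ is an injective local diffeomorphism on $\partial M\times[0,r_C)$ for some $0<r_C\le r_1$; restricting to the compact slab $\partial M\times[0,r_C/2]$ shows $K$ is a homeomorphism onto its image and thus an embedding, so $r_C/2$ is a collar radius and $(M,g)$ is collarable. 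Theorem~\ref{stitch_thm} then applies and gives the corollary.

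I expect the only step needing genuine care to be the uniform-injectivity argument (the rest is soft compactness); alternatively one may simply cite the standard collar neighborhood theorem and observe that compactness of $\partial M$ renders the collar radius uniform. The real geometric content of the corollary is of course supplied by Theorem~\ref{stitch_thm}, whose proof recovers the underlying set of $M$ as the quotient of $\bigsqcup_{\alpha}m_\alpha$ by the intersection relation recorded by $\crossover$, recovers the Riemannian distance from this together with the lengths $m_\alpha$, and then upgrades the resulting distance-preserving bijection to a Riemannian isometry by metric rigidity---the collar hypothesis entering precisely in the analysis of curves and distances near $\partial M$.
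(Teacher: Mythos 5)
Your proposal matches the paper's route exactly: the paper derives the corollary from Theorem~\ref{stitch_thm} by simply noting that a compact manifold with boundary always admits boundary normal coordinates, i.e.\ is collarable. Your added detail (the normal exponential map is a local diffeomorphism along $\partial M\times\{0\}$ plus a compactness/contradiction argument for uniform injectivity) is just a correct spelling-out of that standard collar-existence fact, so the argument is fine and essentially identical in approach.
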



\section{Proof of Main Result}
We prove Theorem \ref{stitch_thm} in two parts. In the first part, we put a length structure on $M$ where admissable curves are piecewise geodesic. We show that the distance function induced by this length structure is equal to $d_g$. For an overview of length spaces and length structures, we refer readers to \cite{burago2001course}.
\par
In the second part, we use the stitching data to construct a length space $X$. We then show that the constructed length space is isomorphic to the piecewise geodesic length space from the first part.


\subsection{The Piecewise Geodesic Length Structure}
Let $(M,L_g,\ps)$ be the standard length structure for $(M,g)$. In particular, a continuous curve $\eta:[a,b]\to M$ is in $\ps$ if and only if $\eta$ is piecewise smooth. Additionally, its length is defined by $L_g(\eta)=\int_a^b|\dot\eta(t)|_gdt$.
\par 
For $x,y\in M$, we denote the set of piecewise smooth curves which begin at $x$ and end at $y$ by $\ps_{x,y}$. The length structure induces a distance function
\begin{equation*}
    d_g(x,y)=\inf\limits_{\eta\in\ps_{x,y}}L_g(\eta)
\end{equation*}
which is the standard Riemannian distance. 
\par
\begin{defn}\label{pg_defn}
    Let $(M,g)$ be a Riemannian manifold with boundary. We say a continuous curve $\eta:[a,b]\to M$ is \textbf{piecewise geodesic} if there exists a partition $\{x_1,...,x_n\}$ of $[a,b]$, vectors $\{v_1,...,v_{n-1}\}\subset SM$, and smooth functions $\{s_1,...,s_{n-1}\}$ such that
    \begin{enumerate}
        \item $v_k\in T_{x_k}M$.
        \item $s_k:[x_k,x_{k+1}]\to I_{v_k}$ 
        \item $\eta\big|_{[x_k,x_{k+1}]}(t)=\gamma_{v_k}(s_k(t))$ for all $t\in[a,b]$.
    \end{enumerate}
    For such a curve, we write $\eta\in\pg$.
\end{defn}
We call the length structure $(M,L_g,\pg)$ the piecewise geodesic length structure. This length structure induces the piecewise geodesic distance
\begin{equation*}
    \pgd(x,y)=\inf\limits_{\eta\in\pg_{x,y}}L_g(\eta)
\end{equation*}
The goal of this section is to prove the following
\begin{thm}\label{part1_thm}
    Let $(M,g)$ be a collarable Riemannian manifold with boundary. Then $d_g=\pgd$.
\end{thm}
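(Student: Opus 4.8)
The inequality $d_g\le\pgd$ is immediate: each segment of a piecewise geodesic curve is $\gamma_{v_k}\circ s_k$ with $s_k$ smooth, hence smooth, so $\pg_{x,y}\subseteq\ps_{x,y}$ and the infima of $L_g$ compare accordingly. All the content is in proving $\pgd\le d_g$. Fix $x,y\in M$ and $\eps>0$, and pick $\eta\in\ps_{x,y}$ with $L_g(\eta)<d_g(x,y)+\eps/2$; the goal is a curve $\zeta\in\pg_{x,y}$ with $L_g(\zeta)<L_g(\eta)+\eps/2$. The plan has two stages: first reduce to curves whose image lies in the interior $M\setminus\partial M$, then approximate interior curves by piecewise geodesics.

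For the reduction, if $x\in\partial M$ prepend the normal geodesic $\sigma_x:[0,\delta]\to M$, $\sigma_x(t)=K(x,t)$, for a small $\delta<r_C$; this is a restriction of $\gamma_{\nu_x}$, so it lies in $\pg$, it has length $\delta$, and its far endpoint $x':=K(x,\delta)$ lies in $M\setminus\partial M$ since $K$ is injective with $K(\partial M\times\{0\})=\partial M$. Do likewise at $y$, taking the corresponding segment trivial if an endpoint is already interior. Then $\zeta=\sigma_x*\zeta_0*\bar\sigma_y$ will lie in $\pg_{x,y}$ once $\zeta_0\in\pg_{x',y'}$ is found, and it suffices to take $L_g(\zeta_0)<L_g(\eta)+2\delta+\eps/4$. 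Now $\eta_1:=\bar\sigma_x*\eta*\sigma_y\in\ps_{x',y'}$ has length $L_g(\eta)+2\delta$ and interior endpoints, but may still meet $\partial M$. Choose a smaller collar radius $r_1<r_C$ with $x',y'\notin N(r_1)$, and for small $\mu>0$ let $F_\mu:M\to M$ be the smooth map equal to the identity outside $N(r_1)$ and given in boundary normal coordinates by $(x,t)\mapsto(x,\psi_\mu(t))$, where $\psi_\mu:[0,r_1]\to(0,r_1]$ is smooth, increasing, agrees with the identity to infinite order at $t=r_1$, has $\psi_\mu(0)>0$, and converges to the identity in $C^1$ as $\mu\to0$. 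Since $\partial M\subseteq N(r_1)$, the curve $\eta_2:=F_\mu\circ\eta_1$ has image in $M\setminus\partial M$ and the same endpoints as $\eta_1$; and since $F_\mu\to\mathrm{id}$ in $C^1$ uniformly on the relevant compact set, $L_g(\eta_2)\to L_g(\eta_1)$, so choosing $\mu$ small makes $L_g(\eta_2)<L_g(\eta_1)+\eps/8$. This collar-pushing is the only place the collarability hypothesis enters, and it is the crux of the argument; without a uniform collar a curve running along $\partial M$ need not admit a length-controlled displacement into the interior.

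It remains to treat $\eta_2$: a piecewise smooth curve with compact image $C\subset M\setminus\partial M$, viewed inside the boundaryless Riemannian manifold $M\setminus\partial M$. By the standard existence of geodesically convex neighborhoods and a Lebesgue-number argument over a finite subcover of $C$, there is $\rho>0$ such that any $p,q\in C$ with $d_g(p,q)<\rho$ are joined by a unique minimizing geodesic of $M\setminus\partial M$ lying in a convex ball about $p$ contained in $M\setminus\partial M$. Using uniform continuity, partition the domain of $\eta_2$ into $s_0<\dots<s_N$ so that consecutive values $p_{i-1}=\eta_2(s_{i-1}),\ p_i=\eta_2(s_i)$ satisfy $d_g(p_{i-1},p_i)<\rho$, replace each subarc by the corresponding minimizing geodesic $\zeta_i$, and set $\zeta_0:=\zeta_1*\cdots*\zeta_N$. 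Each $\zeta_i$ is an affinely reparametrized honest geodesic of $M$ (it stays in $M\setminus\partial M$, so it is a geodesic of $M\setminus\partial M$ and of $M$ alike), whence $\zeta_0\in\pg_{x',y'}$, and $L_g(\zeta_0)=\sum_i d_g(p_{i-1},p_i)\le\sum_i L_g(\eta_2|_{[s_{i-1},s_i]})=L_g(\eta_2)$. Assembling the estimates along the chain $\zeta_0\to\eta_2\to\eta_1\to\eta$ and choosing $\delta<\eps/16$ at the outset gives $L_g(\zeta)<d_g(x,y)+\eps$, as required. I expect the collar-pushing step to demand the most care — in particular, verifying that the length distortion is uniformly small along a curve that is not uniformly embedded near the boundary — whereas the interior step is routine once the reduction is in hand.
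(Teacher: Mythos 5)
Your proposal is correct and takes essentially the same two-step route as the paper: push the curve into $M\setminus\partial M$ through the collar with controlled length increase (your squeeze map $(x,t)\mapsto(x,\psi_\mu(t))$ plus the prepended normal geodesic segments plays exactly the role of the paper's flow of a cutoff normal field, whose flow lines handle the endpoint displacement), then replace the interior curve by a concatenation of short minimizing geodesics (you via convex neighborhoods and a Lebesgue number, the paper via continuity of the injectivity radius and a Lipschitz bound). The only cosmetic slip is writing $L_g(\zeta_0)=\sum_i d_g(p_{i-1},p_i)$: your segments minimize the intrinsic distance of $M\setminus\partial M$, which a priori may exceed $d_g$, but the inequality $L_g(\zeta_i)\le L_g(\eta_2\big|_{[s_{i-1},s_i]})$ that the argument actually needs still holds since each subarc lies in $M\setminus\partial M$.
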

Since $\pg\subset\ps$ and the distance functions are defined by taking the infimum over the corresponding sets of admissable curves, we easily obtain $d_g\le\pgd$. Thus, we wish to show the opposite inequality. Specifically, we claim that $\pgd\le d_g$. We will need a handful of lemmas to prove Theorem \ref{part1_thm}.
\par
First, we show that piecewise smooth curves are lipschitz with respect to the distance function $d_g$.
\begin{lem}\label{lipschitz_lemma}
    Let $\eta\in\ps$. Then $\eta$ is Lipschitz.
\end{lem}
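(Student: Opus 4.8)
The plan is to show that a piecewise smooth curve $\eta:[a,b]\to M$ satisfies a Lipschitz bound $d_g(\eta(s),\eta(t))\le C|s-t|$ for all $s,t\in[a,b]$, where $C$ depends only on $\eta$. First I would observe that $d_g(\eta(s),\eta(t))\le L_g(\eta|_{[s,t]})$, since the restriction $\eta|_{[s,t]}$ is itself a piecewise smooth curve joining $\eta(s)$ to $\eta(t)$, hence lies in $\ps_{\eta(s),\eta(t)}$ and so its length bounds the infimum. Thus it suffices to bound $L_g(\eta|_{[s,t]})=\int_s^t|\dot\eta(\tau)|_g\,d\tau$ by $C|s-t|$.

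For that, the key point is that $|\dot\eta|_g$ is bounded on $[a,b]$. Since $\eta$ is piecewise smooth, there is a partition $a=t_0<t_1<\dots<t_n=b$ such that $\eta$ restricted to each $[t_{i-1},t_i]$ is smooth (with one-sided derivatives at the endpoints existing and finite). On each such closed subinterval, $\tau\mapsto|\dot\eta(\tau)|_g$ is a continuous function on a compact set, hence attains a finite maximum $C_i$. Taking $C=\max_i C_i$ gives $|\dot\eta(\tau)|_g\le C$ for all $\tau\in[a,b]$ (away from the finitely many partition points, which do not affect the integral). Then for any $s\le t$,
\begin{equation*}
    d_g(\eta(s),\eta(t))\le\int_s^t|\dot\eta(\tau)|_g\,d\tau\le C(t-s),
\end{equation*}
and by symmetry the same holds with $|s-t|$ for arbitrary $s,t$, so $\eta$ is $C$-Lipschitz with respect to $d_g$.

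There is essentially no serious obstacle here; the only mild subtlety is bookkeeping at the finitely many nonsmooth junction points of $\eta$, where one must use one-sided derivatives and note that a finite set has measure zero so it is irrelevant to the length integral. One should also be slightly careful that the definition of piecewise smooth in the paper guarantees the one-sided derivatives exist and are finite at the breakpoints (so that the continuous extension of $|\dot\eta|_g$ to each closed piece is legitimate); if instead piecewise smooth only means smooth on the open pieces with a continuous global curve, one can still argue by covering $(t_{i-1},t_i)$ with slightly smaller compact subintervals, but in the standard convention adopted here the clean compactness argument above applies directly. The statement and proof are otherwise routine, and this Lipschitz property will later let us approximate arbitrary piecewise smooth curves by piecewise geodesic ones without losing control of distances.
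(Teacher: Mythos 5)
Your proof is correct and follows essentially the same route as the paper's: bound $d_g(\eta(s),\eta(t))$ by the length of the restricted curve, then bound $|\dot\eta|_g$ uniformly by applying the extreme value theorem on each smooth piece of the partition and taking the maximum of the resulting constants. The extra remarks about one-sided derivatives at the junction points are a harmless refinement of the same argument.
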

\begin{proof}
    Let $\eta:[a,b]\to M$ be in $\ps$.
    We must show that there exists $M>0$ such that $d_g(\eta(s),\eta(t))\le M|s-t|$ for all $s,t\in[a,b]$.
    \par
    Let $\{x_1,...,x_n\}$ be a partition of $[a,b]$ such that $\eta\big|_{[x_k,x_{k+1}]}$ is smooth. Let $\eta_k=\eta\big|_{[x_k,x_{k+1}]}$. Then $|\dot\eta_k|$ is continuous for each $k$. Thus, by the extreme value theorem, there exists $M_k>0$ such that  $|\dot\eta_k|\le M_k$ on $[x_k,x_{k+1}]$. Let $M=\max\{M_1,...,M_{n-1}\}$. Then
    \begin{align*}
        d_g(\eta(s),\eta(t))&\le L_g(\eta\big|_{[s,t]})\\
            &\le \int_s^t|\dot\eta(r)|_gdr\\
            &\le \int_s^t Mdr\\
            &\le M|s-t|
    \end{align*}
    as required.
\end{proof}
Next, we show that if a piecewise smooth curve is contained in the interior of $M$, then there is a piecewise geodesic curve with the same endpoints whose length is no greater than the original curve.
\begin{lem}\label{lem1}
    Let $(M,g)$ be a Riemannian manifold with boundary. Let $x,y\in M\setminus\partial M$ and $\eta:[a,b]\to M\setminus \partial M$ be in $\ps_{x,y}$. Then there exists $\tilde\eta\in \pg_{x,y}$ such that $L_g(\tilde\eta)\le L_g(\eta)$.
\end{lem}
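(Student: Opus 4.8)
The plan is to show that any piecewise smooth curve $\eta$ in the interior can be replaced, piece by piece, by short geodesic segments whose concatenation is no longer than $\eta$. The key tool is a compactness/covering argument using the injectivity radius. First I would note that $\eta([a,b])$ is a compact subset of the open manifold $M\setminus\partial M$, so the function $x\mapsto \inj_x$ is positive and (being lower semicontinuous) attains a positive minimum $\rho>0$ on $\eta([a,b])$. In particular, for every $x$ on the curve, $\exp_x$ restricts to a diffeomorphism on the ball of radius $\rho$, and any two points within $d_g$-distance $\rho$ of such an $x$ are joined by a unique minimizing geodesic of length equal to their distance, lying entirely in $M\setminus\partial M$.

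Next I would use Lemma \ref{lipschitz_lemma}: $\eta$ is Lipschitz with respect to $d_g$, say with constant $C$. Choose a partition $a=t_0<t_1<\dots<t_N=b$ fine enough that $d_g(\eta(t_{i-1}),\eta(t_i))<\rho$ for each $i$ (possible since $C\,|t_i-t_{i-1}|$ bounds this distance). For each $i$, let $\sigma_i$ be the unique minimizing geodesic from $\eta(t_{i-1})$ to $\eta(t_i)$; by the previous paragraph $\sigma_i$ has length $d_g(\eta(t_{i-1}),\eta(t_i)) \le L_g(\eta|_{[t_{i-1},t_i]})$. Let $\tilde\eta$ be the concatenation $\sigma_1 * \sigma_2 * \cdots * \sigma_N$, reparametrized on $[a,b]$. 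Then $\tilde\eta$ starts at $x$, ends at $y$, and
$$L_g(\tilde\eta)=\sum_{i=1}^N L_g(\sigma_i)\le \sum_{i=1}^N L_g(\eta|_{[t_{i-1},t_i]})=L_g(\eta).$$

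It remains to check $\tilde\eta\in\pg_{x,y}$ in the sense of Definition \ref{pg_defn}: this is essentially bookkeeping. Each $\sigma_i$ is $\gamma_{v_i}$ for a suitable unit vector $v_i\in T_{\eta(t_{i-1})}M$ (rescaling so the segment is traversed at unit speed), and one takes $s_i$ to be the affine map from the relevant subinterval of $[a,b]$ onto the appropriate subinterval of $I_{v_i}$; continuity at the partition points holds because consecutive segments share an endpoint. The main obstacle — and the reason the hypothesis $\eta\subset M\setminus\partial M$ is essential — is guaranteeing that the connecting geodesic segments stay in the interior and are genuinely minimizing; this is exactly what the uniform injectivity radius bound $\rho$ on the compact set $\eta([a,b])$ buys us, and it is the step I would write out most carefully. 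Everything else is routine.
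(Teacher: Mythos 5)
Your proposal is correct and follows essentially the same route as the paper's proof: a uniform positive lower bound on the injectivity radius along the compact image of $\eta$, the Lipschitz estimate of Lemma \ref{lipschitz_lemma} to choose a sufficiently fine partition, and replacement of each subarc by the unique minimizing geodesic segment, concatenated to give $\tilde\eta\in\pg_{x,y}$ with $L_g(\tilde\eta)\le L_g(\eta)$. The only cosmetic difference is that you invoke lower semicontinuity of $\inj$ on the compact set $\eta([a,b])$, while the paper cites continuity of the injectivity radius on $M\setminus\partial M$ and applies the extreme value theorem to $t\mapsto\inj_{\eta(t)}$; both yield the needed positive minimum.
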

\begin{proof}
    Let $(M,g),x,y,\eta$ be as stated. Our strategy will be to find a partition $\{t_k\}_{k=1}^n$ of $[a,b]$ such that there is a minimizing geodesic between $\eta(t_k)$ and $\eta(t_{k+1})$. Then we will form $\tilde\eta$ by concatenating the minimizing geodesic segments.
    \par
    From \cite{boumal2020optim} Proposition 10.18, the injectivity radius on a manifold without boundary is continuous. It follows from this that the injectivity radius is continuous on $M\setminus\partial M$. Thus, by the extreme value theorem the function $\inj_{\eta(t)}$ achieves a positive minimum on $[a,b]$. Let $0<r<\inf\limits_{t\in[a,b]}\inj_{\eta(t)}$. This implies that there is a unique unit speed, minimizing geodesic from $\eta(t)$ to $\eta(s)$ whenever $d_g(\eta(t),\eta(s))\le r$.
    \par
    By Lemma \ref{lipschitz_lemma}, there exists $M>0$ such that $d_g(\eta(s),\eta(t))\le M|s-t|$. In particular, if $|s-t|<\frac{r}{M}$, then there is a minimizing geodesic segment from $\eta(s)$ to $\eta(t)$ \cite{lee2018riemannian}.
    \par
    Thus, let $\{t_1,...,t_n\}$ be a partition of $[a,b]$ such that $|t_k-t_{k+1}|<\frac{r}{M}$ for $k=1,...,n-1$. For each such $k$, let $\eta_k:[0,d_g(\eta(t_k),\eta(t_{k+1})]\to M$ be the minimizing geodesic segment connecting $\eta(t_k)$ to $\eta(t_{k+1})$. We form $\tilde\eta$ by concatenating all of the $\eta_k$.
    \par
    It is clear that $\tilde\eta\in\pg_{x,y}$ by construction. Additionally, since the $\eta_k$ are minimizing, $L_g(\eta_k)\le L_g(\eta\big|_{[t_k,t_{k+1}]})$ for all $k=1,2,...,n-1$. Thus, $L_g(\tilde\eta)\le L_g(\eta)$ as required.
\end{proof}
In the following lemma, we construct a family of smooth maps field to push curves away from the boundary.
\begin{lem}\label{flow_const}
    Let $(M,g)$ be a collarable Riemannian manifold with boundary. Then, there exists a smooth one parameter family of maps $\varphi_\bullet:[0,\infty)\to M$ such that
    \begin{enumerate}
        \item $\varphi_0$ is the identity.
        \item For all $t>0$, $\varphi_t(x)\in M\setminus\partial M$.
        \item For all $x\in M$, and all $s>0$, the curve $t\mapsto \varphi_t(x)$ from $[0,s]\to M$ is either stationary or a parameterization of a geodesic segment whose length is less than or equal to $s$.
    \end{enumerate}
\end{lem}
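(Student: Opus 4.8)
The plan is to realise $\varphi_\bullet$ as the forward flow of a single vector field that lives in a collar and is cut off before the collar ends. Fix a collar radius $r_C$ for $(M,g)$ and use the boundary normal coordinates $K:\partial M\times[0,r_C)\to M$ with image $N(r_C)$. Recall that for each $x\in\partial M$ the curve $t\mapsto K(x,t)=\exp_x(t\nu_x)$ is a unit-speed geodesic, so $K_*\partial_t$ is a unit vector field along it (in these coordinates $g$ even takes the split form $dt^2+h_t$). Pick a smooth cutoff $\chi:[0,r_C)\to[0,1]$ with $\chi\equiv 1$ near $t=0$ and $\chi\equiv 0$ on $[r_C/2,r_C)$, and define a vector field $V$ on $M$ by $V=\chi(t)\,K_*\partial_t$ on $N(r_C)$ and $V=0$ on $M\setminus N(r_C)$. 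Since $\chi$ vanishes identically near the far edge $t=r_C$ of the collar chart, $V$ extends by zero to a smooth vector field on $M$, and $|V|_g=\chi(t)\le 1$ everywhere; moreover $V$ points strictly inward along $\partial M$, where $V=\nu_x$.

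Next I would check that the forward flow of $V$ is complete and define $\varphi_t$ to be its time-$t$ map. An integral curve starting in $M\setminus N(r_C)$, or at a point of $N(r_C)$ with $t$-coordinate $\ge r_C/2$, is constant because $V$ vanishes there. An integral curve through $K(x_0,t_0)$ with $t_0<r_C/2$ has constant $\partial M$-coordinate $x_0$ and $t$-coordinate solving $\dot t=\chi(t)$, $t(0)=t_0$; as $\chi\ge 0$ this is nondecreasing and as $\chi\equiv 0$ on $[r_C/2,r_C)$ it never exceeds $r_C/2$, so it exists for all positive times and the curve never leaves $N(r_C)$. Hence the forward flow is defined on $[0,\infty)\times M$; since $V$ is inward-pointing on $\partial M$ the forward flow keeps us in $M$, and smooth dependence on initial data gives a smooth family $\varphi_\bullet$ with $\varphi_0=\operatorname{id}$, which is condition (1).

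For condition (2), observe that the $t$-coordinate of $\varphi_t(x)$ is nondecreasing in $t$ (its derivative is $\chi\ge 0$), so any point already off $\partial M$ — including every point of $M\setminus N(r_C)$ — stays off $\partial M$, while a boundary point $K(x_0,0)$ has $\dot t(0)=\chi(0)=1>0$ and therefore has positive $t$-coordinate for all $t>0$. For condition (3), fix $x$ and $s>0$ and consider $c(t)=\varphi_t(x)$, $t\in[0,s]$. If the integral curve of $x$ meets only points where $\chi=0$, then $c$ is constant. Otherwise $c$ stays in $N(r_C)$ with constant $\partial M$-coordinate $x_0$, so $c(t)=K(x_0,\sigma(t))$ with $\sigma$ the nondecreasing solution of $\dot\sigma=\chi(\sigma)$; thus $c$ is a monotone reparameterisation of a subsegment of the unit-speed geodesic $t\mapsto K(x_0,t)$, and $L_g(c)=\int_0^s\chi(\sigma(t))\,dt\le s$.

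The one point needing care is completeness of the flow: without the cutoff, an integral curve of $K_*\partial_t$ reaches the far edge $t=r_C$ of the collar chart — where the chart, hence the vector field, need not extend — in finite time, so the flow would fail to exist for all $t$. Forcing $\chi$ to vanish on $[r_C/2,r_C)$ both makes $V$ globally smooth and traps every nonconstant integral curve in the region $t\le r_C/2$, which is bounded away from that edge; this is really the only place the collarability hypothesis is used, and everything else is the standard existence, uniqueness, and smooth-dependence package for ODEs on manifolds.
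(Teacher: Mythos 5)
Your proposal is correct and follows essentially the same route as the paper: cut off the boundary-normal field $\partial_t$ in the collar with a function $\chi(t)$ vanishing before the collar ends, take its flow, and verify the three properties by tracking the $t$-coordinate, which solves $\dot t=\chi(t)$. The only differences are cosmetic (the paper cuts off on $[2r_C/3,r_C)$ rather than $[r_C/2,r_C)$, and you spell out the forward-completeness of the flow, which the paper leaves implicit).
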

\begin{proof}
    Let $r_C>0$ be a collar radius for $(M,g)$. Let $X_\nu$ be the vector field on $N(r_C)$ which is given by $\partial_t$ in the boundary normal coordinates $(x,t)\mapsto \exp_x(t\nu_x)$. Let $\chi:[0,r_C)\to[0,1]$ be a non-negative smooth function which is identically one on $[0,\frac{r_C}{3}]$, non-zero on $[0, 2\frac{r_C}{3})$, and identically zero on $[\frac{2r_C}{3},r_C)$. Let $V(x,t)=\chi(t)X_\nu(x,t)$. Then $V$ extends to a smooth vector field which is identically zero on $M\setminus N(\frac{2r_C}{3})$.
    Let $\varphi_t$ be the flow generated by $V$. We claim that $\varphi_\bullet$ has the desired properties.
    \par
    The fact that $\varphi_0$ is the identity is a property of all flows generated by vector fields, so $\varphi_\bullet$ has property 1.
    \par
    Now, we prove that $\varphi_\bullet$ has property 2.
    Let $x\in M$. Then either $x\in N(\frac{2r_C}{3})$ or $x\notin N(\frac{2r_C}{3})$. If $x\notin N(\frac{2r_C}{3})$, then $V(x)=0$ by construction, so the integral curve is stationary. Thus, $\varphi_t(x)=x\in M\setminus N(\frac{2r_C}{3})\subset M\setminus\partial M$.
    \par
    If $x\in N(\frac{2r_C}{3})$, then let $x=(x',s)$ in boundary normal coordinates. Let $f$ solve the initial value problem $\begin{cases}
    f'=\chi\\ f(0)=s\end{cases}$. In particular, $\chi>0$ on $N(\frac{2r_C}{3})$ so $f$ is increasing. Additionally, by construction $\varphi_t(x)=(x',f(t))$, so for $t>0$ we have $f(t)>0$ and $(x',f(t))\notin \partial M$. This proves that $\varphi_\bullet$ has property 2.
    \par
    Finally, we show that $\varphi_\bullet$ has property 3. Let $x\in M$. Again, there are two possibilities. Either $x\in N(\frac{2r_C}{3})$ or $x\notin N(\frac{2r_C}{3})$. As before, if $x\notin N(\frac{2r_C}{3})$, then $\varphi_\bullet(x)$ is stationary.
    \par
    If $x\in N(\frac{2r_C}{3})$, then we have that $\varphi_t(x)=(x',f(t))$ in boundary normal coordinates as before. This is a reparameterization of the unit speed geodesic $t\mapsto (x',t)$. It follows from properties of boundary normal coordinates that $|\frac{d}{dt}\varphi_t(x)|_g=|f'(t)|$. Since $f'(t)=\chi(t)\le 1$, we have that the length of $\varphi_t(x)\big|_{[0,s]}$ is at most $s$ as required.
\end{proof}
Next, we show that we can push a curve away from the boundary in a controlled way.
\begin{lem}\label{lem2}
Let $(M,g)$ be a collarable Riemannian manifold with boundary. Let $x,y\in M$, $\eta\in\ps_{x,y}$, and $\eps>0$. Then there exists $x',y'\in M\setminus \partial M$ and $\tilde\eta\in\ps_{x',y'}$ such that
\begin{enumerate}
    \item $\pgd(x,x')+\pgd(y',y)\le \eps$
    \item The image of $\tilde\eta$ is contained in $M\setminus \partial M$.
    \item $L_g(\tilde\eta)\le L_g(\eta)+\eps$
\end{enumerate}
\end{lem}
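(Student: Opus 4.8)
The plan is to push $\eta$ off of $\partial M$ using the flow $\varphi_\bullet$ from Lemma~\ref{flow_const}. For $t>0$ set $\tilde\eta_t:=\varphi_t\circ\eta$, $x'_t:=\varphi_t(x)$, and $y'_t:=\varphi_t(y)$; since $\varphi_t$ is smooth and $\eta$ is piecewise smooth, $\tilde\eta_t\in\ps_{x'_t,y'_t}$. I will verify that properties 1--3 all hold provided $t>0$ is taken small enough, and then fix such a $t$.

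Properties 1 and 2 come directly from Lemma~\ref{flow_const}. By property 2 of that lemma, $\varphi_t(p)\in M\setminus\partial M$ for every $p\in M$ and every $t>0$; applied to $p=x$, $p=y$, and $p=\eta(s)$ this gives $x'_t,y'_t\in M\setminus\partial M$ and $\tilde\eta_t([a,b])\subset M\setminus\partial M$, which is property 2. For property 1, fix $t>0$ and consider the curve $c_x:s\mapsto\varphi_s(x)$ on $[0,t]$, which runs from $x$ to $x'_t$. By property 3 of Lemma~\ref{flow_const}, $c_x$ is either stationary or a reparameterization of a geodesic segment of length $\le t$; in either case $c_x\in\pg_{x,x'_t}$ with $L_g(c_x)\le t$, so $\pgd(x,x'_t)\le t$. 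The reversed curve $s\mapsto\varphi_{t-s}(y)$ is likewise piecewise geodesic from $y'_t$ to $y$ with length $\le t$, so $\pgd(y'_t,y)\le t$, and hence $\pgd(x,x'_t)+\pgd(y'_t,y)\le 2t$.

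The main technical point is property 3, i.e.\ controlling the length of $\tilde\eta_t$. By the chain rule,
\[
L_g(\tilde\eta_t)=\int_a^b\bigl|d\varphi_t(\dot\eta(s))\bigr|_g\,ds\ \le\ \int_a^b\bigl\|d\varphi_t|_{\eta(s)}\bigr\|_{\mathrm{op}}\,\bigl|\dot\eta(s)\bigr|_g\,ds,
\]
where $\bigl\|d\varphi_t|_p\bigr\|_{\mathrm{op}}$ denotes the operator norm of the differential $d\varphi_t|_p:(T_pM,g)\to(T_{\varphi_t(p)}M,g)$. The function $(t,p)\mapsto\bigl\|d\varphi_t|_p\bigr\|_{\mathrm{op}}$ is continuous on $[0,\infty)\times M$, and it equals $1$ at $t=0$ because $\varphi_0$ is the identity. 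Since $\eta([a,b])$ is compact, for every $\delta>0$ there is $t_\delta>0$ with $\bigl\|d\varphi_t|_p\bigr\|_{\mathrm{op}}\le 1+\delta$ whenever $0\le t\le t_\delta$ and $p\in\eta([a,b])$; for such $t$ the display above gives $L_g(\tilde\eta_t)\le(1+\delta)L_g(\eta)$. Establishing this uniform estimate is the step I expect to need the most care, although it is short precisely because $\eta([a,b])$ is compact, so that no global hypothesis on $M$ is required.

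To finish: if $L_g(\eta)=0$ then $\eta$ is constant, $\tilde\eta_t$ is constant, and property 3 is trivial; otherwise set $\delta:=\eps/L_g(\eta)$. Now choose $t:=\min\{\eps/2,\,t_\delta\}>0$. Then $\pgd(x,x'_t)+\pgd(y'_t,y)\le 2t\le\eps$, the image of $\tilde\eta_t$ lies in $M\setminus\partial M$, and $L_g(\tilde\eta_t)\le(1+\delta)L_g(\eta)=L_g(\eta)+\eps$, so $(x'_t,y'_t,\tilde\eta_t)$ has all the required properties.
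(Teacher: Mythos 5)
Your proof is correct and follows essentially the same route as the paper: both push $\eta$, $x$, and $y$ off the boundary using the flow of Lemma~\ref{flow_const}, using its property 3 to bound $\pgd(x,x')+\pgd(y',y)$ by the flow time and its property 2 to place the pushed curve in $M\setminus\partial M$. The only difference is that where the paper simply invokes continuity of $\delta\mapsto L_g(\varphi_\delta\circ\eta)$ at $\delta=0$, you justify the length bound more explicitly via a uniform operator-norm estimate for $d\varphi_t$ over the compact image of $\eta$, which is a finer but equivalent way of carrying out the same step.
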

\begin{proof}
We use the flow constructed above to prove Lemma \ref{lem2}.
Let $x,y,\eta,\eps$ be as stated. Let $0<r_C<\eps$ be a collar radius for $(M,g)$ and let $\varphi_\bullet$ be the flow constructed in Lemma \ref{flow_const}. 
\par
For $\delta\ge 0$, let $\eta_\delta(t)=\varphi_\delta(\eta(t))$. Then $\eta_\delta\in\ps_{\varphi_\delta(x),\varphi_\delta(y)}$. Since $\varphi_\bullet$ is smooth, we have that $\delta\mapsto L_g(\eta_\delta)$ is continuous and equal to $L_g(\eta)$ when $\delta = 0$. Thus, there exist $\delta'>0$ such that if $\delta<\delta'$, then $L_g(\eta_\delta)\le L_g(\eta)+\eps$.
\par
Additionally, from the property 3 of $\varphi_\bullet$, we have that $\varphi_\bullet(x)\big|_{[0,\delta]}$ and $\varphi_\bullet(y)\big|_{[0,\delta]}$ are piecewise geodesic, and $L_g(\varphi_\bullet(x)\big|_{[0,\delta]}),L_g(\varphi_\bullet(y)\big|_{[0,\delta]})\le\delta$. Thus, $\pgd(x,\varphi_\delta(x))+\pgd(y,\varphi_\delta(y))\le 2\delta$. Choose $\delta <\min\{\eps/2,\delta'\}$. Let $x'=\varphi_\delta(x),y'=\varphi_\delta(y)$ and $\tilde\eta=\eta_\delta$. Then, $\pgd(x,x')+\pgd(y',y)\le\eps$ as required.
\par
Finally, from property 2 of $\varphi_\bullet$, we have that the image of $\tilde\eta$ is contained in $M\setminus\partial M$ as required.
\end{proof}
Finally, we use Lemma \ref{lem1} and Lemma \ref{lem2} to prove Theorem \ref{part1_thm}
\begin{proof}[Proof of Theorem \ref{part1_thm}]
Let $(M,g)$ be a collarable Riemannian manifold with boundary. Let $x,y\in M$ and $\eta\in \ps_{x,y}$. Let $\eps>0$. Then by Lemma \ref{lem2} there exists $x',y'\in M\setminus \partial M$ and $\eta_1\in\ps_{x',y'}$ such that 
\begin{enumerate}
    \item $\pgd(x,x')+\pgd(y',y)\le\frac{\eps}{3}$
    \item The image of $\eta_1$ is contained in $M\setminus \partial M$.
    \item $L_g(\eta_1)\le L_g(\eta)+\frac{\eps}{3}$.
\end{enumerate}
From Lemma \ref{lem1}, there exists $\eta_2\in\pg_{x',y'}$ such that $L_g(\eta_2)\le L_g(\eta_1)$.
\par
    From the definition of $\pgd$, there exists $\eta_3\in\pg_{x,x'}$ and $\eta_4\in\pg_{y',y}$ such that $L_g(\eta_3)\le \pgd(x,x')+\frac{\eps}{6}$ and $L_g(\eta_4)\le \pgd(y',y)+\frac{\eps}{6}$.  Combining this with (1.) above, we obtain $L_g(\eta_3)+L_g(\eta_4)\le2\frac{\eps}{3}$.
\par
Thus, let $\tilde\eta\in\pg_{x,y}$ be obtained by concatenating $\eta_3$, $\eta_2$ and $\eta_4$. Then we have that 
\begin{align*}
    L_g(\tilde\eta)&=L_g(\eta_3)+L_g(\eta_2)+L_g(\eta_4)\\
    &\le 2\frac{\eps}{3} + L_g(\eta_2)\\
    &\le 2\frac{\eps}{3} + L_g(\eta)+\frac{\eps}{3}\\
    &\le L_g(\eta)+\frac{\eps}{3}
\end{align*}
Thus, we have shown that for all $\eps>0$, there exist $\tilde\eta\in\pg_{x,y}$ such that $L_g(\tilde\eta)\le L_g(\eta)+\eps$. From this, it follows that $\pgd(x,y)\le d_g(x,y)$.
\par
Combining this with the trivial inequality that $d_g(x,y)\le \pgd(x,y)$, we get the desired equality $d_g(x,y)=\pgd(x,y)$.
\end{proof}


\subsection{Constructing an Isomorphic Length Space}\label{subsec_equiv}
In the previous section, we showed that $\pgd=d_g$ if $(M,g)$ is collarable. In this section, we show that knowledge of the stitching data allows us to form a length space that is isomorphic to $(M,L_g,\pg)$. From this it will follow that knowledge of the length space allows us to construct a metric space which is isometric to $(M,d_g)$.
\par

As before, let $(M,g)$ be a Riemannian manifold with boundary, and let $(\gind,m,\crossover)$ be a stitching data for $M$. We form the \textbf{stitching space} by taking the disjoint union $\stspace=\sqcup_{\alpha\in\gind} m_\alpha$. For points in $\stspace$, we use subscripts to make it explicit which $m_\alpha$ they come from. For instance, we would write $a_\alpha\in m_\alpha\subset\stspace$.
\par
In the following, we construct a length space $(X,L_X,\admX)$. It is important to note that we do this without reference to $M$; all of the information required to carry out the construction is contained in the stitching data.
\begin{const}\label{equiv_const}
Write $a_\alpha\sim_\stspace b_\beta$ if $\crossover_{\alpha,\beta}(a_\alpha)\ni b_\beta$
This forms an equivalence relation on $\stspace$. Let $\langle a_\alpha\rangle\subset\stspace$ denote the equivalence class containing $a_\alpha$. Let $X=\stspace/\sim_\stspace$.
\par
For $\eta:[c,d]\to X$, write $\eta\in\admX$ if there exists a partition $\{x_1,...,x_n\}$ of $[c,d]$, a subset  $\{\alpha_1,...,\alpha_{n-1}\}\subset\gind$, and smooth curves $\{\eta_1,...,\eta_{n-1}\}$ such that
\begin{enumerate}
    \item $\eta_k:[x_k,x_{k+1}]\to m_{\alpha_k}$, for $k=1,2,...,n-1$.
    \item $\eta\big|_{[x_k,x_{k+1}]}(t)=\langle\eta_k(t)\rangle$, for $k=1,2,...,n-1$
    \item $\langle\eta_k(x_{k+1})\rangle = \langle \eta_{k+1}(x_{k+1})\rangle$ for $k=1,2,...,n-2$.
\end{enumerate}
\par
For $\eta\in\admX$, with $\{\eta_1,...,\eta_{n-1}\}$ as above, define
\begin{equation*}
    L_X(\eta)=\sum_1^{n-1} L(\eta_k)
\end{equation*}
Where $L(\eta_k)=\int_{x_k}^{x_{k+1}}|\eta_k'(s)|ds$
\end{const}

\begin{lem}
   The relation $\sim_\stspace$ defined in Construction \ref{equiv_const} is in fact an equivalence relation.
\end{lem}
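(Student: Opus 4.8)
The plan is to use the function $f:\gind\to SM$ guaranteed by Definition~\ref{stitch_defn} to re-express $\sim_\stspace$ as the pullback of the equality relation on $M$, and then read off reflexivity, symmetry, and transitivity from the corresponding properties of equality. Note that $\sim_\stspace$ is defined purely in terms of $\crossover$, so although $f$ is not unique it suffices to fix any one $f$ satisfying the three conditions of the definition. For $a_\alpha\in\stspace$, recall that the subscript records that $a$ lies in $m_\alpha$, and that by condition~2 we have $m_\alpha=I_{f(\alpha)}$, so $\gamma_{f(\alpha)}(a)$ is a well-defined point of $M$. By condition~3 (with $s=a$, $t=b$), $b_\beta\in\crossover_{\alpha,\beta}(a_\alpha)$ if and only if $\gamma_{f(\alpha)}(a)=\gamma_{f(\beta)}(b)$; hence, by the definition in Construction~\ref{equiv_const},
\begin{equation*}
    a_\alpha\sim_\stspace b_\beta \iff \gamma_{f(\alpha)}(a)=\gamma_{f(\beta)}(b).
\end{equation*}

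Granting this translation, the three axioms are immediate. Reflexivity: $\gamma_{f(\alpha)}(a)=\gamma_{f(\alpha)}(a)$ always holds, using only that $a\in m_\alpha=I_{f(\alpha)}$ so the point is defined. Symmetry: if $\gamma_{f(\alpha)}(a)=\gamma_{f(\beta)}(b)$ then $\gamma_{f(\beta)}(b)=\gamma_{f(\alpha)}(a)$, so $a_\alpha\sim_\stspace b_\beta$ implies $b_\beta\sim_\stspace a_\alpha$. Transitivity: if $a_\alpha\sim_\stspace b_\beta$ and $b_\beta\sim_\stspace c_\gamma$, then $\gamma_{f(\alpha)}(a)=\gamma_{f(\beta)}(b)$ and $\gamma_{f(\beta)}(b)=\gamma_{f(\gamma)}(c)$, so $\gamma_{f(\alpha)}(a)=\gamma_{f(\gamma)}(c)$ and $a_\alpha\sim_\stspace c_\gamma$.

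I do not anticipate any real obstacle: the argument is essentially a bookkeeping step that pushes the relation through $f$. The only two points that deserve an explicit sentence are (i) that $\gamma_{f(\alpha)}(a)$ is genuinely defined for every $a_\alpha\in\stspace$, which is exactly condition~2 of Definition~\ref{stitch_defn}, and (ii) that condition~3 is a biconditional, so that the displayed equivalence runs in both directions and the axioms for equality transfer without loss.
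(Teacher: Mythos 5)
Your proof is correct and follows essentially the same route as the paper: fix an $f$ from Definition \ref{stitch_defn}, use the biconditional in condition 3 to identify $a_\alpha\sim_\stspace b_\beta$ with $\gamma_{f(\alpha)}(a)=\gamma_{f(\beta)}(b)$, and inherit reflexivity, symmetry, and transitivity from equality of points in $M$. Your explicit remarks that condition 2 makes $\gamma_{f(\alpha)}(a)$ well defined and that the choice of $f$ is immaterial are fine additions but do not change the argument.
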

\begin{proof}
    We must show that $\sim_\stspace$ is reflexive, symmetric and transitive. Let $f:\gind \to SM$ satisfy the hypotheses of Definition \ref{stitch_defn}.
    \par
    First, we show that $\sim_\stspace$ is reflexive. Let $a_\alpha\in m_\alpha\subset \stspace$. Then $\gamma_{f(\alpha)}(a_\alpha)=\gamma_{f(\alpha)}(m_\alpha)$, so $a_\alpha\in \crossover_{\alpha,\alpha}(a_\alpha)$. Thus, $a_\alpha\sim_\stspace a_\alpha$ as required.
    \par
    Now, we show that $\sim_\stspace$ is symmetric. Let $a_\alpha,b_\beta\in \stspace$. Suppose $a_\alpha\sim\stspace b_\beta$. Then, $b_\beta\in\crossover_{\alpha,\beta}(a_\alpha)$. Thus, $\gamma_{f(\alpha)}(a_\alpha)=\gamma_{f(\beta)}(b_\beta)$. This implies that $a_\alpha\in\crossover_{\beta,\alpha}(b_\beta)$, so $b_\beta\sim_\stspace a_\alpha$ as required.
    \par
    Finally, we show that $\sim_\stspace$ is transitive. Let $a_\alpha,b_\beta,c_\zeta\in \stspace$. Suppose that $a_\alpha\sim_\stspace b_\beta$ and $b_\beta\sim_\stspace c_\zeta$. Then it follows that $\gamma_{f(\alpha)}(a_\alpha)=\gamma_{f(\beta)}(b_\beta)=\gamma_{f(\zeta)}(c_\zeta)$. Thus, $c_\zeta\in \crossover_{\alpha,\zeta}(a_\alpha)$. This implies that $a_\alpha\sim_\stspace c_\zeta$ as required.
\end{proof}

\begin{defn}\label{iso_length}
    Let $(Y_i,L_i,\mathcal{A}^i)$ be length spaces for $i=1,2$. We say that the length spaces are isomorphic if there exists a bijection $\varphi:Y_1\to Y_2$ such that
    \begin{enumerate}
        \item The map $\eta\mapsto \eta\circ\varphi$ is a bijection from $\mathcal{A}^1\to\mathcal{A}^2$
        \item $L_2(\eta\circ\varphi)=L_1(\eta)$ for all $\eta\in\mathcal{A}^1$.
    \end{enumerate}
\end{defn}
Clearly the metric spaces generated by isomorphic length spaces are isometric. Thus, we wish to show the following:
\begin{thm}\label{iso_length_thm}
    The length structure $(X,L_X,\admX)$ is isomorphic to $(M,L_g,\pg)$.
\end{thm}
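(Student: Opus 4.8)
The plan is to exhibit an explicit bijection $\varphi\colon X\to M$ witnessing the isomorphism in the sense of Definition \ref{iso_length}. Fix a function $f\colon\gind\to SM$ as in Definition \ref{stitch_defn} and set $\varphi(\langle a_\alpha\rangle):=\gamma_{f(\alpha)}(a_\alpha)$. By item 3 of Definition \ref{stitch_defn}, $a_\alpha\sim_\stspace b_\beta$ holds precisely when $\gamma_{f(\alpha)}(a_\alpha)=\gamma_{f(\beta)}(b_\beta)$, so $\varphi$ is simultaneously well defined and injective. For surjectivity, any $p\in M$ is the base point of some $v\in SM$, whence $p=\gamma_v(0)$; by item 1 there is $\alpha\in\gind$ with $[f(\alpha)]=[v]$, and the elementary time-shift property of integral curves of $X_g$ — if $w=\hat\gamma_v(\tau)$ then $I_w=I_v-\tau$ and $\gamma_w(s)=\gamma_v(s+\tau)$ for $s\in I_w$ — yields some $s\in I_{f(\alpha)}=m_\alpha$ with $\gamma_{f(\alpha)}(s)=p$, i.e. $\varphi(\langle s_\alpha\rangle)=p$. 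Hence $\varphi$ is a bijection.

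Next I would check that $\eta\mapsto\varphi\circ\eta$ is a bijection from $\admX$ onto $\pg$. For the forward inclusion, take $\eta\in\admX$ with data $(\{x_k\},\{\alpha_k\},\{\eta_k\})$ as in Construction \ref{equiv_const}; on $[x_k,x_{k+1}]$ we have $(\varphi\circ\eta)(t)=\gamma_{f(\alpha_k)}(\eta_k(t))$. Putting $v_k:=\hat\gamma_{f(\alpha_k)}(\eta_k(x_k))$ — a unit vector, since the geodesic flow preserves $SM$, based at $(\varphi\circ\eta)(x_k)$ — and $s_k(t):=\eta_k(t)-\eta_k(x_k)$, one checks via the time-shift property that $s_k$ maps $[x_k,x_{k+1}]$ into $I_{v_k}$ and that $\gamma_{v_k}(s_k(t))=\gamma_{f(\alpha_k)}(\eta_k(t))$, so the three conditions of Definition \ref{pg_defn} hold; continuity of $\varphi\circ\eta$ at each junction $x_k$ is exactly item 3 of Construction \ref{equiv_const} after applying $\varphi$. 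Conversely, given $\zeta\in\pg$ with data $(\{t_k\},\{v_k\},\{s_k\})$, use item 1 to pick $\alpha_k$ with $[f(\alpha_k)]=[v_k]$, write $f(\alpha_k)=\hat\gamma_{v_k}(\tau_k)$, and set $\eta_k(t):=s_k(t)-\tau_k$; the time-shift property gives $\eta_k\colon[t_k,t_{k+1}]\to I_{v_k}-\tau_k=I_{f(\alpha_k)}=m_{\alpha_k}$ together with $\gamma_{f(\alpha_k)}(\eta_k(t))=\gamma_{v_k}(s_k(t))=\zeta(t)$. Then $\eta|_{[t_k,t_{k+1}]}(t):=\langle\eta_k(t)\rangle$ defines a curve in $\admX$ — its item 3 follows from $\gamma_{f(\alpha_k)}(\eta_k(t_{k+1}))=\zeta(t_{k+1})=\gamma_{f(\alpha_{k+1})}(\eta_{k+1}(t_{k+1}))$ — and $\varphi\circ\eta=\zeta$. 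Injectivity of $\varphi$ then upgrades "onto" to "bijection".

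For the length condition, let $\eta\in\admX$ be as above; on each $[x_k,x_{k+1}]$ the chain rule gives $\tfrac{d}{dt}(\varphi\circ\eta)(t)=\eta_k'(t)\,\dot\gamma_{f(\alpha_k)}(\eta_k(t))$, and since $f(\alpha_k)\in SM$ the geodesic $\gamma_{f(\alpha_k)}$ has unit speed, so $|\tfrac{d}{dt}(\varphi\circ\eta)(t)|_g=|\eta_k'(t)|$. Integrating over $[x_k,x_{k+1}]$ and summing yields $L_g(\varphi\circ\eta)=\sum_k\int_{x_k}^{x_{k+1}}|\eta_k'(t)|\,dt=L_X(\eta)$; as a byproduct this also confirms that $L_X$ is independent of the chosen representation of $\eta$, since $L_g(\varphi\circ\eta)$ is. Combined with the previous paragraph, $\varphi$ satisfies both clauses of Definition \ref{iso_length}, which proves Theorem \ref{iso_length_thm}.

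I expect the only real difficulty to be bookkeeping rather than conceptual: one must keep two reparametrizations straight — the abstract parameter living in $m_\alpha$ versus the arc-length parameter of the geodesic $\gamma_{v_k}$ — and track maximal domains carefully under the shift $s\mapsto s-\tau$ so that every interval inclusion ($s_k$ into $I_{v_k}$, $\eta_k$ into $m_{\alpha_k}$) is genuinely valid, including at the endpoints. The reparametrization in the forward direction, taking $v_k=\hat\gamma_{f(\alpha_k)}(\eta_k(x_k))$ rather than $v_k=f(\alpha_k)$, is precisely what is needed to meet the base-point requirement of Definition \ref{pg_defn}. Everything else reduces to unwinding the definitions of $\sim_\stspace$, of $\admX$, and of the equivalence relation on $SM$.
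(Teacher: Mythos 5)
Your proposal is correct and follows essentially the same route as the paper: the same map $\varphi(\langle a_\alpha\rangle)=\gamma_{f(\alpha)}(a_\alpha)$, bijectivity from items 1 and 3 of Definition \ref{stitch_defn}, composition giving a bijection of admissible curves via the time-shift reparametrization (the paper's Lemma \ref{equiv_repar}), and length preservation via the unit-speed chain-rule computation (the paper's Lemma \ref{equal_deriv}). You simply prove those two auxiliary facts inline, and in fact check the forward inclusion $\varphi\circ\eta\in\pg$ (base-point choice of $v_k$) slightly more explicitly than the paper does.
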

To prove Theorem \ref{iso_length_thm} we will need the following two facts, which follow directly from basic properties of the geodesic flow:
\begin{lem}\label{equiv_repar}
    Let $(M,g)$ be a Riemannian manifold with boundary. Suppose $v,w\in SM$ and $v\sim_{SM}w$. Then, there exists an isometry $s:I_v\to I_w$ such that $\hat\gamma_v=\hat\gamma_w\circ s$, and $\gamma_v=\gamma_w\circ s$.
\end{lem}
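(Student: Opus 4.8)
The plan is to exhibit the reparametrization $s$ explicitly as a translation. Since $v\sim_{SM}w$, by definition there is some $t_0\in I_v$ with $\hat\gamma_v(t_0)=w$. First I would recall the basic uniqueness theory for integral curves of the smooth vector field $X_g$ on $TM$: the curve $r\mapsto\hat\gamma_v(t_0+r)$ is an integral curve of $X_g$ whose value at $r=0$ is $\hat\gamma_v(t_0)=w$, so it agrees with $\hat\gamma_w$ on the overlap of their domains. I claim moreover that its domain, namely $\{r:t_0+r\in I_v\}=I_v-t_0$, is exactly $I_w$: any integral curve $\sigma$ through $w$ defined on an interval strictly larger than $I_v-t_0$ would, via $t\mapsto\sigma(t-t_0)$, produce an integral curve agreeing with $\hat\gamma_v$ at $t_0$ but defined on an interval strictly larger than $I_v$, contradicting the maximality of $I_v$. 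Hence $I_w=I_v-t_0$ and $\hat\gamma_w(r)=\hat\gamma_v(t_0+r)$ for all $r\in I_w$.

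Given this, define $s:I_v\to I_w$ by $s(t)=t-t_0$. This is a translation, hence a bijection between the two intervals and an isometry with respect to the standard metric on $\mathbb{R}$. By construction $\hat\gamma_w(s(t))=\hat\gamma_w(t-t_0)=\hat\gamma_v(t)$ for every $t\in I_v$, i.e. $\hat\gamma_v=\hat\gamma_w\circ s$. Projecting onto the base via the bundle projection $\pi:TM\to M$ and using $\gamma_v=\pi\circ\hat\gamma_v$, $\gamma_w=\pi\circ\hat\gamma_w$, we obtain $\gamma_v=\pi\circ\hat\gamma_w\circ s=\gamma_w\circ s$, as desired.

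I do not anticipate a genuine obstacle here; the statement is essentially the translation-invariance of integral curves of a vector field. The only point requiring a little care — and the step I would write out most carefully — is the equality $I_w=I_v-t_0$ (rather than merely the inclusion $I_w\supseteq I_v-t_0$), since $X_g$ need not be complete on a manifold with boundary; as indicated above this follows from running the maximality argument symmetrically in both directions.
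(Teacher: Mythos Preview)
Your argument is correct and is precisely the standard translation-invariance argument the paper is alluding to; in fact the paper does not give a proof of this lemma at all, merely stating that it ``follow[s] directly from basic properties of the geodesic flow.'' Your write-up supplies exactly those details, and the care you take with the equality $I_w=I_v-t_0$ (via maximality in both directions) is appropriate.
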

\begin{lem}\label{equal_deriv}
    Let $(M,g)$ be a Riemannian manifold with boundary. Suppose $v\in SM$ and $\eta:[a,b]\to I_v$ is smooth. Let $\mu:[a,b]\to M$ be defined by $\mu(t)=\gamma_v(\eta(t))$. Then $|\eta'(t)|=|\dot\mu(t)|_g$.
\end{lem}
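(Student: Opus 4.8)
The plan is to recognize this as a direct application of the chain rule combined with the defining property of the unit sphere bundle, namely that geodesics issuing from vectors in $SM$ have constant unit speed. Concretely, because $v\in SM$, the integral curve $\hat\gamma_v$ of the geodesic vector field $X_g$ preserves the norm, so $|\dot\gamma_v(s)|_g=|v|_g=1$ for every $s\in I_v$. This is the only geometric input needed; everything else is calculus.

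First I would fix $t\in[a,b]$ and differentiate the composition $\mu=\gamma_v\circ\eta$. Since $\eta$ is smooth into $I_v$ and $\gamma_v$ is smooth, the chain rule gives
\begin{equation*}
    \dot\mu(t)=\frac{d}{dt}\gamma_v(\eta(t))=\eta'(t)\,\dot\gamma_v(\eta(t)).
\end{equation*}
Here $\dot\gamma_v(\eta(t))\in T_{\mu(t)}M$ and $\eta'(t)$ is a scalar, so $\dot\mu(t)$ is a scalar multiple of the velocity of the geodesic at the reparametrized time. Taking the Riemannian norm and using homogeneity of $|\cdot|_g$ over scalars yields
\begin{equation*}
    |\dot\mu(t)|_g=|\eta'(t)|\,\bigl|\dot\gamma_v(\eta(t))\bigr|_g.
\end{equation*}
Finally I would invoke the constant-unit-speed fact above to replace $|\dot\gamma_v(\eta(t))|_g$ by $1$, obtaining $|\dot\mu(t)|_g=|\eta'(t)|$ as claimed.

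Since the statement is so close to the chain rule, there is no real obstacle; the only point requiring care is the justification that $|\dot\gamma_v|_g\equiv 1$, which I would state explicitly as a standard property of the geodesic flow (geodesics have parallel, hence constant-norm, velocity fields, and the initial norm is $|v|_g=1$ because $v\in SM$). I would also note that the argument is valid verbatim at points where $\mu(t)\in\partial M$, as the computation is pointwise in a single tangent space and does not interact with the boundary.
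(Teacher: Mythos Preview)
Your proof is correct and matches the paper's approach: the paper does not give a separate proof of this lemma, simply remarking that it ``follow[s] directly from basic properties of the geodesic flow,'' which is exactly the chain rule plus unit-speed argument you wrote out.
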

\begin{proof}[Proof of Theorem \ref{iso_length_thm}]
    Let $(\gind,m,\crossover)$ be the stitching data for $(M,g)$ from which $(X,L_X,\admX)$ was constructed. Then, there exists $f:\gind\to SM$ satisfying the constraints of Definition \ref{stitch_defn}.
    \par
    Let $\tilde\varphi:\stspace\to M$ be defined by $\tilde\varphi(a_\alpha)=\gamma_{f(\alpha)}(a_\alpha)$.
    If $\langle a_\alpha\rangle=\langle b_\beta\rangle$, then $\gamma_{f(\alpha)}(a_\alpha)=\gamma_{f(\beta)}(b_\beta)$. Thus, $\tilde\varphi(a_\alpha)=\tilde\varphi(b_\beta)$, so $\tilde\varphi$ is constant on the equivalence classes of $\sim_\stspace$. This implies that $\tilde\varphi$ passes to the quotient space. Specifically, there exists $\varphi:X\to M$ satisfying $\varphi(\langle a_\alpha\rangle)=\tilde\varphi(a_\alpha)$. We claim that $\varphi$ satisfies the constraints of Definition \ref{iso_length}. 
    \par
    First, we show that $\varphi$ is surjective. Let $y\in M$. We must show that there exists $\langle a_\alpha\rangle\in X$ such that $\varphi(\langle a_\alpha\rangle)= y$. Let $v\in S_yM$. Since $\alpha\mapsto [f(\alpha)]$ is surjective, there exists $\alpha\in\gind$ such that $[f(\alpha)]=[v]$. By the definition of $\sim_{SM}$, there exists $a_\alpha\in I_{f(\alpha)}=m_\alpha$ such that $\gamma_{f(\alpha)}(a_\alpha)=y$. Thus, $\tilde\varphi(a_\alpha)=y$, so $\varphi(\langle a_\alpha\rangle)=y$ as required.
    \par
    Next, we show that $\varphi$ is injective. Let $\langle a_\alpha\rangle,\langle b_\beta\rangle\in X$. Suppose $\varphi(\langle a_\alpha\rangle)=\varphi(\langle b_\beta\rangle)$. We must show that $a_\alpha\sim_\stspace b_\beta$. The fact that $\varphi(\langle a_\alpha\rangle)=\varphi(\langle b_\beta\rangle)$ implies that $\gamma_{f(\alpha)}(a_\alpha)=\gamma_{f(\beta)}(b_\beta)$. Thus, $b_\beta\in\crossover_{\alpha,\beta}(a_\alpha)$, so $a_\alpha\sim_\stspace b_\beta$ as required. Thus, we have that $\varphi$ is surjective and injective, so it is a bijection.
    \par
    Let $\Phi:\admX\to \pg$ be defined by $\Phi(\eta)=\eta\circ\varphi$. We claim that $\Phi$ is a bijection.
    \par
    First, we show that $\Phi$ is surjective. Let $\tilde\eta:[a,b]\to M$ be a piecewise geodesic path. We must show that there exists $\eta\in \admX$ such that $\Phi(\eta)=\tilde\gamma$. Since $\tilde\eta$ is piecewise geodesic, there exists a partition $\{x_1,...,x_n\}$ of $[a,b]$ such that $\tilde\eta\big|_{[x_k,x_{k+1}]}(t)=\gamma_{v_k}(s_k(t))$ for $v_k\in SM$ and $s_k:[x_k,x_{k+1}]\to I_{v_k}$ smooth. 
    \par
    There exists $\{\alpha_1,...,\alpha_{n-1}\}\subset\gind$ such that $[f(\alpha_k)]=[v_k]$, since $\alpha\mapsto [f(\alpha)]$ is surjective. Let $\tilde s_k:I_{v_k}\to m_{\alpha_k}$ be the isometry guaranteed in Lemma \ref{equiv_repar}. Then, let $\gamma_k:[x_k,x_{k+1}]\to m_{\alpha_k}$ be defined by $\gamma_k(t)=\langle \tilde s_k(s_k(t))\rangle$. 
    \par
    Observe that $\varphi(\eta_k(t))=\gamma_{f(\alpha_k)}(s_k(t))=\tilde\eta\big|_{[x_k,x_{k+1}]}(t)$. Thus, if we form $\eta$ by concatenating the $\eta_k$, we get that $\tilde\eta=\Phi(\eta)$ as required.
    \par
    Now, we show that $\Phi$ is injective. Suppose $\eta_1,\eta_2\in\admX$, and $\Phi(\eta_1)=\Phi(\eta_2)$. We must show that $\eta_1=\eta_2$. This follows from the fact that $\varphi$ is injective. Thus, we have that $\Phi$ is a bijection.
    \par
    Finally, we wish to show that $\Phi$ preserves lengths. Let $\eta\in\admX$. Suppose $\eta:[a,b]\to X$, $\{x_1,...,x_n\}$ is a partition of $[a,b]$, and $\{\alpha_1,...,\alpha_{n-1}\} \subset\gind$  such that $\eta\big|_{[x_k,x_{k+1}]}(t)=\langle\eta_k(t)\rangle$ for paths $\eta_k:[x_k,x_{k+1}]\to m_{\alpha_k}$. Let $\mu_k(t)=\gamma_{f(\alpha_k)}(\eta_k(t))=\varphi(\langle \eta_k(t)\rangle)$. Then by Lemma \ref{equal_deriv}
    \begin{align*}
        L_X(\eta) &=\sum\limits_{k=1}^{n-1}L(\eta_k)\\
            &=\sum\limits_{k=1}^{n-1}\int_{x_k}^{x_{k+1}}|\eta_k'(s)|ds\\
            &=\sum\limits_{k=1}^{n-1}\int_{x_k}^{x_{k+1}}|\mu_k'(s)|ds\\
            &=\sum\limits_{k=1}^{n-1}L_g(\Phi(\eta)\big|_{[x_k,x_{k+1}]})\\
            &= L_g(\Phi(\eta))
    \end{align*}
    as required.
\end{proof}


\section{Review of Boundary Measurement Problems}
Many seismic and medical imaging problems can be framed as taking measurements of a geometric system from the boundary and trying to recover the interior geometry from these measurements. Thus, we would like to frame the stitching data in these terms. Before we do this, we review two of the standard boundary measurement inverse problems: boundary rigidity and lens rigidity.
\par
For all of the following problems, the given measurements do not change under an isometry that fixes the boundary. We call this the `natural obstruction'.
\par
For a more complete review of current results on boundary measurement problems, we refer readers to \cite{review}.


\subsection{Boundary Rigidity}
Distance is perhaps the simplest geometric quantity. Thus, the first boundary measurement we will discuss is the distance between boundary points. Seismically, this corresponds to measuring how long it takes an earthquake wave to propogate from the earthquake epicenter to different seismometers set up around the globe. Mathematically, let $(M,g)$ be a Riemannian manifold with boundary. Suppose we are given $(\partial M,d_g\big|_{\partial M\times \partial M})$. The boundary rigidity problem is to determine when this information allows us to recover $(M,g)$ up to the natural obstruction. 
\par
Let $\mathscr{M}$ be a class of Riemannian manifolds with boundary. We say that $\mathscr{M}$ is boundary rigid if the following holds: For all pairs of manifolds $(M_1,g_1),(M_2,g_2)\in\mathscr{M}$ such that there exists a diffeomorphism of the boundaries $\varphi^\partial:\partial M_1\to \partial M_2$ satisfying $d_{g_2}(\varphi^\partial(x),\varphi^\partial(y))=d_{g_1}(x,y)$ for all $x,y\in\partial M_1$, then $\varphi^\partial$ extends to a diffeomorphism $\varphi:M_1\to M_2$ such that $g_1=\varphi^*g_2$.
\par
Not all classes of Riemannian manifolds are boundary rigid. Consider the class of compact Riemannian manifolds with boundary. One can construct a compact Riemannian manifold with boundary $(M,g)$ that has an open subset $U\subset M$ such that no distance-minimizing geodesics between boundary points pass through $U$. Thus, $g\big|_U$ is invisible to the boundary distance data. In particular, we can perturb $g$ on $U$ such that the boundary distances remain the same, but the isometry class of $M$ is altered. For a specific example, take the round sphere and remove an open geodesic disk properly contained in one of the hemisphere.
\par
One class of manifolds that avoids the above issue is \textit{simple} manifolds. A compact, connected Riemannian manifold is simple if $\partial M$ is strictly convex (i.e. the second fundamental form on the boundary is everywhere positive definite), and all geodesics are free of conjugate points. In \cite{review39}, Michel conjectured that the class of simple manifolds is boundary rigid. It is not known whether the entire class of simple manifolds is boundary rigid, however the following subclasses are known to be boundary rigid:
\begin{enumerate}
    \item Simple 2-dimensional manifolds \cite{review56}
    \item Simple subspaces of Euclidean space \cite{review23} 
    \item Simple subspaces of an open 2-dimensional hemisphere \cite{review40}
    \item Simple subspaces of symmetric spaces of constant negative curvature \cite{review3}
\end{enumerate}
 

\subsection{Lens Rigidity}
In the previous subsection, an issue arose when there was an open subset through which no length-minimizing geodesics between boundary points pass. We addressed this issue by restricting to a class of manifolds for which this does not occur. Alternatively, one might hope to address this issue by considering geodesics which are not length-minimizing.
\par
Let $(M,g)$ be a compact Riemannian manifold with boundary which is a codimension 0 subspace of a complete Riemannian manifold without boundary $(\tilde M,\tilde g)$. In other words, $M\subset\tilde M$ and $\tilde g\big|_M= g$. Define the \textit{exit time function} $\tau : SM\to [0,\infty]$ by $\tau(v)=\infty$ if $\gamma_v(t)\in M$ for all $t\ge 0$, otherwise $\tau(v)=\inf\{t\ge 0|\gamma_v(t)\in \tilde M\setminus M\}$. We note that the values of $\tau$ on $SM$ do not depend on the specific extension of $M$ to a manifold $\tilde M$. Intuitively $\tau(v)$ is the first time that $\gamma_v$ exits the manifold $M$. When $\tau(v)\neq\infty$ for all $v\in\partial SM$, we say $(M,g)$ is \textit{non-trapping}.
\par 
If $\tau(v)\neq \infty$, define $\Sigma(v)=\dot\gamma_v(\tau(v))$. Intuitively, $\Sigma(v)$ is the direction that $\gamma_v$ is traveling when it exits the manifold $M$. If $\tau(v)=\infty$, we leave $\Sigma(v)$ undefined. Thus, we obtain a partially defined function $\Sigma:SM\to SM$. We call $\Sigma$ the \textit{scattering relation}, and the pair $(\partial SM,\Sigma\big|_{\partial SM})$ is the \textit{scattering data}. 
\par If, in addition to the scattering relation, we are given the exit times, what we have is the \textit{lens data}. Specifically, the lens data is the triple $(\partial SM,\Sigma\big|_{\partial SM},\tau\big|_{\partial SM})$. Observe that every point in $M$ has a geodesic which passes through it and the boundary. Thus, in principle the lens data may contain information about portions of the manifold which are invisible to the boundary distance data.  \par Note that if $v\in S_xM$ is outward pointing (i.e. $\langle v,\nu_x\rangle<0)$, then $\tau(v)=0$ and $\Sigma(v)=v$. In most of the literature, the scattering and exit time relation are initially defined only for inward pointing directions, and then extended to be defined on all of $\partial SM$. For expositional simplicity, we will stick with our defintion. While this definition differs from the extensions in the literature, one can be obtained from the other, so all of the results are equivalent. As with the boundary rigidity, we say a class of manifolds is lens rigid if the lens data determines the metric up to an isometry which fixes the boundary. 
\par
The lens data determines the boundary distance data, and when the manifold is simple, they are equivalent \cite{review39}. Thus, one may ask if the additional information contained in the lens data provides us with anything useful in the non-simple case.
\par
Guillarmou, Mazzucchelli and Tzou showed in \cite{review25} that the class of non-trapping, oriented compact Riemannian surfaces is boundary rigid. This class is larger than the class of simple 2-dimensional Riemannian manifolds, since it replaces the convex restriction with a non-trapping restriction, and simple manifolds are already non-trapping.
\par
In \cite{review37}, Lassas, Sharafutdinov and Uhlmann show that the boundary distances for a simple Riemannian manifold (and hence the lens data) determine the jets of the metric at the boundary in boundary normal coordinates. In \cite{review70}, Stefanov and Uhlmann extend this result to manifolds without conjugate points (thus, lifting the convex boundary assumption). 


\section{The Delayed Collision Data} 
Now, we develop a boundary measurement type problem from which we will recover the stitching data.
\par
Imagine, for each point in on the boundary of $M$, we can choose an inward pointing direction and shoot a particle at unit speed along the geodesic in that direction. Imagine further, that at another point on the boundary, you can wait any amount of time from when the first particle was released, and fire another particle at unit speed along a geodesic in any inward pointing direction. Then, you can detect whether the two fired particles collide and how long it took for the collision to occur.  
\par
In this section, we formalize the data set described above and show that it determines the stitching data (and hence the geometry of the manifold) under reasonable geometric assumptions.
\par
To encode the geodesic information described above as boundary data, we would like all geodesics to reach the boundary in at least one direction. The folowing definition captures this idea
\begin{defn}
    Let $(M,g)$ be a Riemannian manifold with boundary. If the map $v\mapsto [v]$ from $\inward M$ to $\geod$ is surjective, we say that $(M,g)$ is \textbf{semi-nontrapping}.
\end{defn}
Where $\partial_+SM=\cup_{x\in\partial M}\{v\in S_xM|\langle\nu_x,v\rangle\ge 0\}$.
\par
Let $(M,g)$ be a Riemannian manifold with boundary. Let $v,w\in\inward M$ and $D\ge 0$. If $\gamma_v(t)\neq\gamma_w(t+D)$ for all $t\ge 0$, then write $\mathbb{D}(v,w,D)=\infty$, otherwise write $\mathbb{D}(v,w,D)=\inf\{t\in I_v|\gamma_v(t)=\gamma_w(t+D)\}$. We call $\mathbb{D}$ the delayed collision operator. We defined the delayed collision data $\DID=\{(v,w,s,D)\in\inward M\times\inward M\times I_v\times [0,\infty) | \mathbb{D}(v,w,D)=s\}$. Intuitively $(v,w,s,D)\in\DID$ if we fire a particle in direction $w$, wait $D$ units of time, then fire a particle in direction $v$ and the first collision occurs after $s$ more units of time.


\subsection{Relation to Lens Data}
We briefly discuss the relationship between the delayed collision data and the lens data. Namely, that the delayed collision data is stronger than the lens data.
\par
Let $v\in\partial SM$. We start by dealing with the edge case $\Sigma(v)=v$ (i.e. $v$ is outward pointing or tangent to the boundary at a convex point such that the geodesic $\gamma_v$ immediately exits $M$). This occurs if and only if all collisions with $\gamma_v$ occur at $\gamma_v(0)$. Specifically, that $\{s>0|(v,w,s,D)\in\DID\text{ for some }w,s,d\}=\varnothing$. Thus, the delayed collision data allows us to identify the set $\Sigma(v)=v$. Thus, in what follows, we will assume that if $\Sigma(v)=w$, then $v\neq w$. 
\par
Let $v\in S_+M$ and $\Sigma(v)=w$. Then $\gamma_v$ and $\gamma_{-w}$ are parameterizations of the same geodesic but in opposite directions. In particular,  there are a continuum of intersection points between $\gamma_v$ and $\gamma_{-w}$. All of these points will show up in the delayed collision data: if $\gamma_v(s)=\gamma_{-w}(t)$ and $s\le t$, then $(v,-w,s,t-s)\in\DID$.
\par
Conversely, if $\Sigma(v)\neq w$ then $\gamma_v$ and $\gamma_{-w}$ intersect at discrete points. Thus, we have $\Sigma(v)=w$ if and only if the set $\{s|(v,-w,s,D)\in\DID\text{ or }(w,-v,s,D)\in\DID\}$ contains an interval.
Thus, the delayed collision data determines the scattering relation.
\par
Now, we wish to recover the exit times. Observe that $\tau(v)=\infty$ if and only if there does not exist any $w$ such that $\Sigma(v)=w$. Thus, we restrict our attention to non-trapped geodesics. Suppose that $\Sigma(v)=w$. Then, observe that $\tau(v)=D$ if and only if $\sigma(v,-w,0,D)\in\DID$. THus, the delayed collision data determines the exit times.


\subsection{Recovery of Stitching Data} 
In this subsection, we show that the delayed collision data determines the stitching data if the manifold $(M,g)$ is what we will call `generically delayed'.
\par
As an intermediate between the delayed collision data and stitching data, we define the stitching boundary relation to be the set $\mathcal{B}=\{(v,w,s,t)\in\inward M\times\inward M\times [0,\infty)\times[0,\infty)|\gamma_v(s)=\gamma_w(t)\}$. The stitching boundary relation is essentially a repackaged stitching data where the index set, $\gind$, is just $\inward M$.
\begin{lem}
    Let $(M,g)$ be semi-nontrapping. Then the stitching boundary relation determines a stitching data for $(M,g)$.
\end{lem}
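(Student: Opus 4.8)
The plan is to produce a stitching data explicitly, using the smallest natural index set: take $\gind = \inward M$ and let $f \colon \gind \to SM$ be the inclusion $\inward M \hookrightarrow SM$ (recall $\inward M \subset SM$), so that $f(v) = v$ and $[f(v)] = [v]$. With this choice, condition (1) of Definition \ref{stitch_defn} — that $v \mapsto [v]$ be surjective from $\inward M$ onto $\geod$ — is precisely the semi-nontrapping hypothesis, hence automatic. The remaining work is to recover from $\mathcal{B}$ alone the domain family $m_\bullet$ and the crossover maps $\crossover_{\bullet,\bullet}$, and to check that with this $f$ they satisfy conditions (2) and (3).

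For the domains, observe that for $v \in \inward M$ the quadruple $(v,v,s,s)$ lies in $\mathcal{B}$ exactly when $\gamma_v(s)$ is defined, i.e. exactly when $s \in I_v$; since $v$ points inward (or is tangent), $\gamma_v$ cannot be prolonged to negative parameter without leaving $M$, so $I_v \subseteq [0,\infty)$ and thus $I_v = \{\, s \in [0,\infty) : (v,v,s,s) \in \mathcal{B}\,\}$. We therefore define $m_v$ by the right-hand side, a set built from $\mathcal{B}$ alone, and condition (2), $m_v = I_{f(v)} = I_v$, holds by construction. For the crossover maps, for $v,w \in \inward M$ and $s \in m_v$ we set $\crossover_{v,w}(s) := \{\, t \in [0,\infty) : (v,w,s,t) \in \mathcal{B}\,\}$, again read off from $\mathcal{B}$; note $t$ in this set forces $t \in I_w = m_w$, so $\crossover_{v,w} \colon m_v \to 2^{m_w}$ as required, and unwinding the definition of $\mathcal{B}$ gives $t \in \crossover_{v,w}(s)$ iff $\gamma_v(s) = \gamma_w(t)$, i.e. $\gamma_{f(v)}(s) = \gamma_{f(w)}(t)$, which is condition (3). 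This exhibits $(\gind, m, \crossover)$ as a stitching data for $(M,g)$, all of whose ingredients were extracted from $\mathcal{B}$.

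As the surrounding remarks suggest, there is little of substance to overcome here: the lemma is essentially a change of packaging. The one step that deserves care is the claim $I_v \subseteq [0,\infty)$ used in the domain recovery. This is immediate for strictly inward-pointing $v$, but a direction tangent to a convex portion of $\partial M$ whose geodesic re-enters the interior in backward time can have $I_v$ containing negative parameters, which $\mathcal{B}$ cannot detect. I would handle this by treating the tangent locus separately — either by checking that for a semi-nontrapping manifold every such geodesic is already represented by a strictly inward-pointing vector (so the tangent directions may be dropped from $\gind$ without losing surjectivity onto $\geod$), or by reparametrizing $f$ on that locus so that the recovered domain lies in $[0,\infty)$. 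This edge case, rather than the main construction, is where any real work lies.
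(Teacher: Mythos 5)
Your construction is essentially the paper's proof: the paper likewise takes $\gind=\inward M$ with $f$ the obvious injection, notes that condition (1) is exactly the semi-nontrapping hypothesis, recovers the domains from $\mathcal{B}$ (via $m_v=\{t\ge 0 : (v,w,t,s)\in\mathcal{B}\text{ for some }w,s\}$, which amounts to the same thing as your diagonal trick with $w=v$), and sets $\crossover_{v,w}(s)=\{t\in m_w : (v,w,s,t)\in\mathcal{B}\}$. The tangent-direction caveat you raise is genuine and is in fact glossed over by the paper, which simply asserts that $m_v=I_v$ is clear; for $v$ tangent to $\partial M$ the maximal domain $I_v$ can contain negative parameters (e.g.\ a boundary geodesic such as the equator of a closed hemisphere, where $I_v=\RR$), while $\mathcal{B}$ only records nonnegative ones. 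Note, however, that neither of your sketched patches handles that example: the equator class has no strictly inward-pointing representative, and every unit-speed representative has domain $\RR$, so the natural repair is instead to recover the negative part of $I_v$ through the reversed vector $-v\in\inward M$ (using $\gamma_v(-t)=\gamma_{-v}(t)$) rather than to drop or reparametrize the tangent locus. Apart from this loose end, which your write-up at least flags while the paper does not, your argument coincides with the paper's.
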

\begin{proof}
    We let $\gind=\inward M$. Then let $f:\inward M\to SM$ be the obvious injection. Since $(M,g)$ is semi-nontrapping, $f$ is surjective. For $v\in \gind$, we define $m_v=\{t\ge 0|\exists w\in\gind,s\in\RR\text{ such that }(v,w,t,s)\in\mathcal{B}\}$. It is clear that $m_v=I_v$.
    \par
    Finally, we define $\crossover_{v,w}(s)=\{t\in m_w|(v,w,s,t)\in\mathcal{B}\}$.
\end{proof}
\begin{figure}
\centering
\begin{subfigure}{.5\textwidth}
  \centering
  \includegraphics[width=\linewidth]{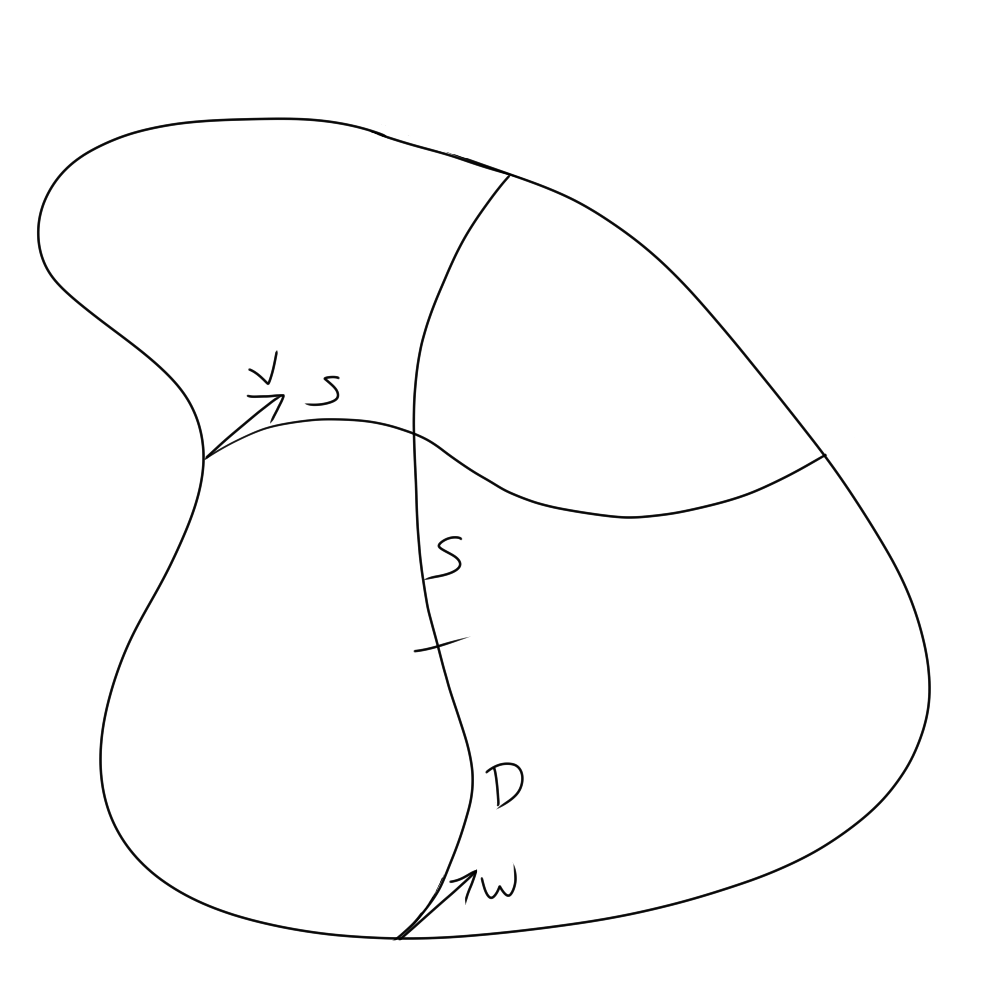}
    \caption{$(v,w,s,D)\in\DID$}
  \label{fig:did1}
\end{subfigure}%
\begin{subfigure}{.5\textwidth}
  \centering
  \includegraphics[width=\linewidth]{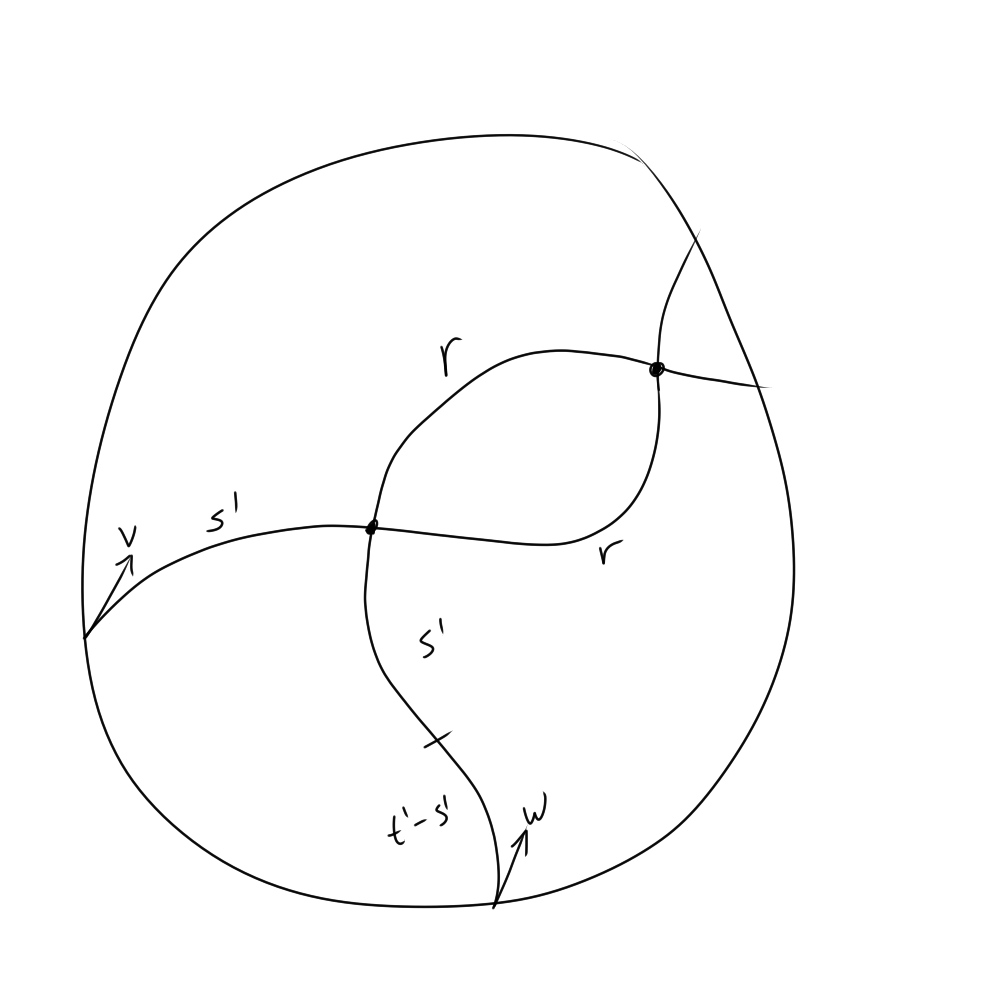}
  \caption{One intersection point is `hiding' another, because both geodesic segments have length $r$}
  \label{fig:did2}
\end{subfigure}
\end{figure}
We would like to recover the stitching boundary relation from the delayed collision data.
Observe that, a sufficient condition for $\gamma_v(s)=\gamma_w(t)$ is that $t\ge s$ and $(v,w,s,t-s)\in\DID$ (or if $t\le s$ then $(w,v,t,s-t)\in\DID$). However, this is not a necessary condition. In particular if $\gamma_v(s)=\gamma_w(t)$ and $t\ge s$, but $(v,w,s,t-s)\notin\DID$, then there exists $t'\ge s'$ and $r>0$ such that $(v,w,s',t'-s')\in\DID$ and $t=t'+r$, $s=s'+r$. Geometrically, this occurs when two geodesics have an intersection point, and then you travel the same length along each geodesic to reach another intersection point. Intuitively, this means that any delayd pair of particles that ``would'' collide at the second intersection point, collide at the first intersection point instead. This situation should be rare, since the length between intersection points measured along both geodesics would have to be \textit{exactly} the same.
\begin{defn}
    Let $(M,g)$ be a Riemannian manifold with boundary. A pair of vectors $(v,w)\in\inward M$ is \textbf{generically delayed} if $\gamma_v(s)=\gamma_w(t)$ implies $(v,w,s,t-s)\in\DID$ or $(w,v,t,s-t)\in\DID$.
\end{defn}
When $(v,w)$ are not generically delayed, we have one intersection point `hiding' intersection points past it. If the `hidden' intersection points can be reached by a third geodesic which does not have the original issue, we can overcome this obstruction. This third geodesic is confirming the existence of the hidden intersection points. The following definition formalizes this.
\begin{defn}
    Let $(M,g)$ be a Riemannian manifold with boundary. We say that $(M,g)$ \textbf{confirms intersections} if for all $(v,w,s,t)\in\mathcal{B}$, there exists $z\in\inward M$ such that
    \begin{enumerate}
        \item $\gamma_z$ passes through $\gamma_v(s)$
        \item $(v,z)$ and $(w,z)$ are generically delayed.
    \end{enumerate}
    In such a case, we say that $z$ \textbf{confirms the intersection} $(v,w,s,t)$.
\end{defn}

\begin{lem}
    Let $(M,g)$ be a semi-nontrapping Riemannian manifold that confirms intersections. Then $(v,w,s,t)\in\mathcal{B}$ if and only if there exists $z\in\inward M$ and $r\in I_z$ such that
    \begin{enumerate}
        \item $(z,v,r,s-r)\in\DID$ or $(v,z,s,r-s)\in\DID$.
        \item $(z,w,r,t-r)\in\DID$ or $(w,z,t,r-t)\in\DID$.
    \end{enumerate}
\end{lem}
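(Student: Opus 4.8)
The plan is to prove the two implications separately. The reverse direction is pure bookkeeping with the definition of $\DID$, and in fact uses neither the semi-nontrapping nor the confirming-intersections hypothesis; the forward direction is where ``confirms intersections'' does all the work.

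For the reverse direction, suppose $z\in\inward M$ and $r\in I_z$ satisfy clauses (1) and (2). Recall that membership in $\DID$ records a genuine collision, so $(a,b,\sigma,D)\in\DID$ entails $\gamma_a(\sigma)=\gamma_b(\sigma+D)$. From whichever alternative of (1) holds one reads off $\gamma_v(s)=\gamma_z(r)$: from $(z,v,r,s-r)$ one gets $\gamma_z(r)=\gamma_v(r+(s-r))=\gamma_v(s)$, and from $(v,z,s,r-s)$ one gets $\gamma_v(s)=\gamma_z(s+(r-s))=\gamma_z(r)$. Likewise (2) gives $\gamma_w(t)=\gamma_z(r)$. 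Concatenating, $\gamma_v(s)=\gamma_z(r)=\gamma_w(t)$, so $(v,w,s,t)\in\mathcal{B}$.

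For the forward direction, suppose $(v,w,s,t)\in\mathcal{B}$, so $\gamma_v(s)=\gamma_w(t)=:p$. Since $(M,g)$ confirms intersections, fix a $z\in\inward M$ confirming the intersection $(v,w,s,t)$: then $\gamma_z$ passes through $p$, so there is $r\in I_z$ with $\gamma_z(r)=p$, and both pairs $(v,z)$ and $(w,z)$ are generically delayed. Applying the definition of ``generically delayed'' for the pair $(v,z)$ to the incidence $\gamma_v(s)=\gamma_z(r)$ yields precisely ``$(v,z,s,r-s)\in\DID$ or $(z,v,r,s-r)\in\DID$,'' which is clause (1); applying it for the pair $(w,z)$ to $\gamma_w(t)=\gamma_z(r)$ yields clause (2). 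The same parameter $r$ works for both clauses because $\gamma_z$ meets $\gamma_v$ and $\gamma_w$ at the one common point $p$.

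I do not expect a deep obstacle here: all the content has been front-loaded into the definition of ``confirms intersections,'' which was engineered so that every element of $\mathcal{B}$ is witnessed by a third geodesic seeing both of its incidences as honest first collisions. The one place to be careful — more bookkeeping than obstacle — is the matching of quadruples: one must keep straight which vector in a $\DID$-quadruple is the first particle and which is the delayed one (this is exactly what produces the disjunctions in clauses (1) and (2)), note that the sign of $r-s$ (resp. $r-t$) decides which disjunct is even syntactically meaningful since a delay must be nonnegative, and check that every quadruple written down lies in the product set where $\DID$ (resp. $\mathcal{B}$) was defined, i.e. that the relevant vectors lie in $\inward M$ and the relevant parameters in their domains. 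Each of these follows immediately from the definitions.
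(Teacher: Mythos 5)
Your proof is correct and follows essentially the same route as the paper: the forward direction invokes a confirming vector $z$ and applies the definition of generically delayed to the pairs $(v,z)$ and $(w,z)$ at the common intersection parameter $r$, while the reverse direction reads off $\gamma_v(s)=\gamma_z(r)=\gamma_w(t)$ from the $\DID$-membership and concludes by transitivity. Your added remarks (that the reverse direction uses neither hypothesis, and the sign bookkeeping on $r-s$, $r-t$) are accurate but do not change the argument.
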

\begin{proof}
    First, suppose that $(v,w,s,t)\in\mathcal{B}$. Suppose $z$ confirms the intersection $(v,w,s,t)$. This implies that $\gamma_z$ passes through $\gamma_v(s)=\gamma_w(t)$ and that $(v,z)$, $(w,z)$ are generically delayed. Let $r\in I_z$ be such that $\gamma_z(r)=\gamma_v(s)=\gamma_w(t)$. By the definition of generically delayed we have that
    \begin{enumerate}
        \item $(z,v,r,s-r)\in\DID$ or $(v,z,s,r-s)\in\DID$.
        \item $(z,w,r,t-r)\in\DID$ or $(w,z,t,r-t)\in\DID$.
    \end{enumerate}
    as required.
    \par
    Conversely, suppose that there exists $z\in\inward M$ and $r\in I_z$ such that
    \begin{enumerate}
        \item $(z,v,r,s-r)\in\DID$ or $(v,z,s,r-s)\in\DID$.
        \item $(z,w,r,t-r)\in\DID$ or $(w,z,t,r-t)\in\DID$.
    \end{enumerate}
    Then (1.) implies that $\gamma_z(r)=\gamma_v(s)$, and (2.) implies that $\gamma_z(r)=\gamma_w(t)$. Thus, by transitivity $\gamma_v(s)=\gamma_w(t)$, so $(v,w,s,t)\in\mathcal{B}$ as required.
\end{proof}
As a direct corollary of the lemma, we obtain
\begin{thm}
    Let $(M,g)$ be a semi-nontrapping Riemannian manifold that confirms intersections. Then the delayed collision data determines a stitching data for $(M,g)$.
\end{thm}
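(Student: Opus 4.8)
The plan is simply to chain together the two preceding lemmas, so the argument is short. The immediately preceding lemma shows that, for a semi-nontrapping manifold that confirms intersections, the condition $(v,w,s,t)\in\mathcal{B}$ is equivalent to the existence of $z\in\inward M$ and $r\in I_z$ satisfying a pair of requirements phrased purely in terms of $\DID$. Since this biconditional characterizes $\mathcal{B}$ using nothing but the delayed collision data, it follows that $\DID$ \emph{determines} $\mathcal{B}$: any two manifolds in the stated class sharing the same delayed collision data must have the same stitching boundary relation.

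Next I would invoke the earlier lemma asserting that, for a semi-nontrapping manifold, the stitching boundary relation determines a stitching data $(\gind,m,\crossover)$; that proof in fact exhibits one explicitly, taking $\gind=\inward M$, $m_v=I_v$, and $\crossover_{v,w}(s)=\{t\in m_w\mid (v,w,s,t)\in\mathcal{B}\}$. Composing the two determinations produces a stitching data built solely from $\DID$, which is exactly the assertion of the theorem.

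The only thing to watch is the bookkeeping of the word ``determines'' across the two steps: one should confirm that passing from $\DID$ to $\mathcal{B}$ (via the preceding lemma) and then from $\mathcal{B}$ to $(\gind,m,\crossover)$ (via the earlier lemma) are both canonical, so that the final stitching data depends on no information beyond $\DID$ itself. This is immediate from the explicit formulas, so there is no genuine obstacle here — the substantive work lay in establishing that a manifold which confirms intersections really does expose every intersection point through a suitable confirming geodesic, which the preceding lemma already handles.
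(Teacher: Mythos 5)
Your proposal matches the paper exactly: the paper presents the theorem as a direct corollary, obtained by using the final lemma to recover the stitching boundary relation $\mathcal{B}$ from $\DID$ and then applying the earlier lemma that $\mathcal{B}$ determines a stitching data for a semi-nontrapping manifold. Your attention to the canonicity of both determinations is sound and consistent with the explicit constructions in those lemmas.
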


\section*{Acknowledgements}
This research is partially supported by the NSF.
\begin{bibdiv}
\begin{biblist}

\bib{lee2003introduction}{book}{
  title={Introduction to Smooth Manifolds},
  author={Lee, J.M.},
  isbn={9780387954486},
  lccn={2002070454},
  series={Graduate Texts in Mathematics},
  url={https://books.google.com/books?id=eqfgZtjQceYC},
  year={2003},
  publisher={Springer}
}

\bib{boumal2020optim}{book}{
    title={An Introduction to Optimization on Smooth Manifolds},
    author={Boumal, Nicolas},
    url={http://web.math.princeton.edu/~nboumal/book/IntroOptimManifolds_Boumal_2020.pdf},
    year={2020}
}

\bib{lee2018riemannian}{book}{
  title={Riemannian Manifolds: An Introduction to Curvature},
  author={Lee, J.M.},
  series={Graduate Texts in Mathematics},
  year={2018},
  publisher={Springer New York}
}

\bib{Schick_2001}{article}{
   title={Manifolds with Boundary and of Bounded Geometry},
   volume={223},
   ISSN={1522-2616},
   url={http://dx.doi.org/10.1002/1522-2616(200103)223:1<103::AID-MANA103>3.0.CO;2-S},
   DOI={10.1002/1522-2616(200103)223:1<103::aid-mana103>3.0.co;2-s},
   number={1},
   journal={Mathematische Nachrichten},
   publisher={Wiley},
   author={Schick, Thomas},
   year={2001},
   month={Mar},
   pages={103–120}
}

\bib{burago2001course}{book}{
  title={A Course in Metric Geometry},
  author={Burago, Dimitri},
  author={Burago, Yuri},
  author={Ivanov, Sergei},
  isbn={9780821821299},
  lccn={20122062},
  series={Crm Proceedings \& Lecture Notes},
  url={https://books.google.com/books?id=afnlx8sHmQIC},
  year={2001},
  publisher={American Mathematical Society}
}

\bib{stefanov2017local}{article}{
    title={Local and global boundary rigidity and the geodesic X-ray transform in the normal gauge},
    author={Stefanov,Plamen},
    author={Uhlmann, Gunther},
    author={Vasy, Andras},
    year={2017},
    eprint={1702.03638},
    archivePrefix={arXiv},
    primaryClass={math.DG}
}

\bib{review}{article}{
  title={Travel time tomography},
  author={Stefanov, Plamen},
  author={Uhlmann, Gunther},
  author={Vasy, Andras},
  author={Zhou, Hanming},
  journal={Acta Mathematica Sinica, English Series},
  volume={35},
  number={6},
  pages={1085--1114},
  year={2019},
  publisher={Springer}
}

\bib{review39}{article}{
  title={Sur la rigidit{\'e} impos{\'e}e par la longueur des g{\'e}od{\'e}siques},
  author={Michel, Ren{\'e}},
  journal={Inventiones mathematicae},
  volume={65},
  number={1},
  pages={71--83},
  year={1981},
  publisher={Springer}
}

\bib{review56}{article}{
  title={Two dimensional simple compact manifolds with boundary are boundary rigid},
  author={Pestov, L},
  author={Uhlmann, G},
  journal={Ann. Math},
  volume={161},
  number={2},
  pages={1089--1106},
  year={2005}
}

\bib{review23}{article}{
  title={Filling riemannian manifolds},
  author={Gromov, Mikhael},
  journal={Journal of Differential Geometry},
  volume={18},
  number={1},
  pages={1--147},
  year={1983},
  publisher={Lehigh University}
}

\bib{review40}{article}{
  title={Restriction de la distance g{\'e}od{\'e}sique a un arc et rigidit{\'e}},
  author={Michel, Ren{\'e}},
  journal={Bulletin de la Soci{\'e}t{\'e} Math{\'e}matique de France},
  volume={122},
  number={3},
  pages={435--442},
  year={1994}
}

\bib{review3}{article}{
  title={Entropies et rigidit{\'e}s des espaces localement sym{\'e}triques de courbure strictement n{\'e}gative},
  author={Besson, G{\'e}rard},
  author={Courtois, Gilles},
  author={Gallot, Sylvestre},
  journal={Geometric \& Functional Analysis GAFA},
  volume={5},
  number={5},
  pages={731--799},
  year={1995},
  publisher={Springer}
}
\bib{review25}{article}{
  title={Boundary and lens rigidity for non-convex manifolds},
  author={Guillarmou, Colin},
  author={Mazzucchelli, Marco},
  author={Tzou, Leo},
  journal={arXiv preprint arXiv:1711.10059},
  year={2017}
}

\bib{review37}{article}{
  title={Semiglobal boundary rigidity for Riemannian metrics},
  author={Lassas, Matti},
  author={Sharafutdinov, Vladimir},
  author={Uhlmann, Gunther},
  journal={Mathematische Annalen},
  volume={325},
  number={4},
  pages={767--793},
  year={2003},
  publisher={Berlin [etc.] J. Springer [etc.]}
}

\bib{review70}{article}{
  title={Local lens rigidity with incomplete data for a class of non-simple Riemannian manifolds},
  author={Stefanov, Plamen},
  author={Uhlmann, Gunther},
  journal={arXiv preprint math/0701595},
  year={2007}
}

\end{biblist}
\end{bibdiv}

\end{document}